\newcommand{\beqa}{\begin{eqnarray}}
\newcommand{\eeqa}[1]{\label{#1}\end{eqnarray}}
\newcommand{\beq}{\begin{equation}}
\newcommand{\eeq}[1]{\label{#1}\end{equation}}
\def\R{\mathbb{R}}
\def\Z{\mathbb{Z}}
\def\N{\mathbb{N}}
\def\l{\lambda}
\def\phi{\varphi}
\def\O{\Omega}
\def\demifleche{\rightharpoonup}
\def\*fleche{\buildrel *\over\demifleche}
\def\tol2{\buildrel\hbox{$L^2$}\over\longrightarrow}
\def\toto{\leaders\hbox to 5mm{\hfil.\hfil}\hfill}
\newcommand{\dist}{\operatorname{dist}}
\newcommand{\ep}{\varepsilon}
\newcommand{\RR}{\mathbb{R}}
\newtheorem{theorem}{Theorem}[section]
\newtheorem{corollary}[theorem]{Corollary}
\newtheorem{lemma}[theorem]{Lemma}
\newtheorem{remark}[theorem]{Remark}
\newcommand{\e}{\varepsilon}
\begin{document}
\title{\sc Extreme localisation of eigenfunctions to one-dimensional high-contrast periodic problems with a defect}

\author[1]{Mikhail Cherdantsev}
\author[2]{Kirill Cherednichenko}
\author[2]{Shane Cooper}
\affil[1]{Cardiff School of Mathematics, Senghennydd Road, Cardiff, CF24 4AG}
\affil[2]{Department of Mathematical Sciences, University of Bath, Claverton Down, Bath, BA2 7AY, UK}
\maketitle

\begin{abstract}
Following a number of recent studies of resolvent and spectral convergence of non-uniformly elliptic families of differential operators describing the behaviour of periodic composite media with high contrast, we study the corresponding one-dimensional version that includes a ``defect'': an inclusion of fixed size with a given set of material parameters. It is known that the spectrum of the purely periodic case without the defect  and its limit, as the period $\varepsilon$ goes to zero, has a band-gap structure. We consider a sequence of eigenvalues $\l_\e$ that are induced by the defect and converge to a point $\l_0$ located in a gap of the limit spectrum for the periodic case. We show that the corresponding eigenfunctions are ``extremely'' localised to the defect, in the sense that the localisation exponent behaves as $\exp(-\nu/\varepsilon),$ $\nu>0,$ which has not been observed in the existing literature.
As a consequence, we argue that $\l_0$ is an eigenvalue of a certain limit operator defined on the defect only. In two- and three-dimensional configurations, whose one-dimensional cross-sections are described by the setting considered, this implies the existence of propagating waves that are localised to the defect.  We also show that the unperturbed operators are norm-resolvent close to a degenerate operator on the real axis, which is described explicitly.  

{\bf Keywords} High-contrast homogenisation $\cdot$  Wave localisation $\cdot$ Spectrum $\cdot$ Decay estimates

\end{abstract}

\section{Introduction}

The question of whether a macroscopic perturbation of material properties in a periodic medium or structure (periodic composite) induces the existence of a localised solution (bound state) to the time-harmonic version of the equations of motion is of special importance from the physics, engineering and mathematical points of view. Depending on the application context, such a solution can have either an advantageous or undesirable effect on the behaviour of systems containing the related composite medium as a component. For example, in the context of photonic (phononic) crystal fibres, perturbations of this kind have been exploited for the transport of electromagnetic (elastic) energy over large distances with little loss into the surrounding space, see {\it e.g} \cite{Knight, Russell}. In the mathematics literature, proofs of the existence or non-existence of such a localised solution have been carried out using the tools of the classical asymptotic analysis of the governing equations and spectral analysis of operators generated by the governing equations in various ``natural" function spaces. The choice of the concrete class of equations and functions under study is usually motivated by the applications in mind, and several works that have marked the development of the related analytical techniques cover a wide range of operators and their relatively compact perturbations, {\it e.g.} \cite{Weyl}, \cite{Birman1961}, \cite{AADH}, \cite{FiKl}.
 
The present work is a study of localisation properties for a class of composite media that has been the subject of increasing interest in the mathematics and physics literature recently, in view of it relation to the so-called metamaterials,  {\it e.g.} manufactured composites possessing negative refraction properties. It has been shown in \cite{ChCoGu} that the spectrum of a stratified high-contrast composite, represented mathematically by a one-dimensional periodic second-order differential equation, has an infinitely increasing number of gaps (lacunae) opening in the spectrum, in the limit of the small ratio $\varepsilon$ between the period and the overall size of the composite. This analytical feature, analogous to the spectral property of multi-dimensional high-contrast periodic composites shown in \cite{Zhikov2000}, provides a mathematical recipe for the use of such materials in physics context or technologies where the presence of localised modes (generated by defects in the medium) has important practical implications. In the physical context of photonic crystal fibres and within the mathematical setup of multi-dimensional high-contrast media, this link has been studied in \cite{KamSm1}, \cite{Cherdantsev1}, \cite{Cherdantsev2}. In the paper \cite{KamSm1}, a two-scale asymptotics for eigenfunctions of a high-contrast second-order elliptic differential operator with a finite-size perturbation (defect) was derived. It was shown that for eigenvalues $\lambda$ in gaps of the spectrum of the (two-scale) operator representing the leading-order term of this asymptotics, there are sequences of eigenvalues of the finite-period problems that converge to $\lambda$ as $\varepsilon\to0.$ The subsequent works \cite{Cherdantsev1}, \cite{Cherdantsev2} developed a multiscale version of Agmon's approach \cite{Agmon} and proved that the corresponding eigenfunctions of the limit operator decay exponentially fast away from the defect.  An important technical assumption in all these works is that the low-modulus inclusions in the composite have a positive distance to the boundary of the period cell, which is not possible to satisfy in one dimension. 

In the more recent paper \cite{ChCoGu}, a family of non-uniformly elliptic periodic one-dimensional problems with high contrast was studied, which in practically relevant situations corresponds to a stratified composite with alternating layers of homogeneous media with highly contrasting material properties. It was shown that the spectra of the corresponding operators converge, as $\varepsilon\to0$, to the band-gap spectrum of a  two-scale operator described explicitly in terms of the original material parameters. Introducing a finite-size defect $D$ into the setup of \cite{ChCoGu}, one is led to consider the operator
\[
-(a_D^\varepsilon u')',\ \ \ \ a_D^\varepsilon>0,
\]
where $a_d^\varepsilon$ takes values of order one on $D$ and $\varepsilon$-periodic ($\varepsilon>0$) in ${\mathbb R}\setminus D$ with alternating values of order one and $\varepsilon^2.$ As was mentioned in \cite[Section 5.1]{ChCoGu}, a formal analysis suggests that the rate of decay of eigenfunctions localised in the vicinity of the perturbation $D$ 
is ``accelerated exponential'', rather than just exponential as in \cite{Cherdantsev2}, in the sense that the decay exponent increases in absolute value as $\varepsilon\to0.$ The goal of the present work is to provide a rigorous proof of this property, formulated below as Theorem \ref{mainthm}. In view of the above discussion, this new localisation property can be seen as a consequence of two features of the underlying periodic composite: loss of uniform ellipticity (via the presence of soft inclusions in the moderately stiff matrix material) and the geometric condition of the matrix material components being separated by the inclusions. 


In addition to our main result, we formulate (Section \ref{sec:limspectrum}) a new characterisation of the limit spectrum for the unperturbed family of problems in the whole space discussed in \cite{ChCoGu} and strengthen (Section \ref{sec:resolvent}) the result of \cite{ChCoGu} by proving an order-sharp norm-resolvent convergence estimate for this family (Theorem \ref{estlemma}). In particular, this new estimate implies order-sharp uniform convergence estimates, as $\varepsilon\to0,$ for the related family of parabolic problems, via the norm-resolvent convergence of the corresponding operator semigroups.


\section{Problem formulation and main results}
\label{sec:probform}
For $\varepsilon, h\in(0,1),$
we introduce the sets
\[
\Omega_0^\e: = \bigcup_{z \in \mathbb Z}(\e z , \e z +\e h )
 ,\qquad \text{and} \qquad  \Omega_1^\e : = \bigcup_{z \in \mathbb Z}(\e z + \e h , \e z +\e )
   = \mathbb R  \setminus \overline{\Omega_0^\e},
\]
and denote $Y_0:=(0,h)$, $Y_1:=(h,1)$, $Y:=(0,1)$. We define the $\e$-periodic functions 
\begin{equation}
\label{upcoefs}
\begin{aligned}
 a^\e(x):=\left\{\begin{array}{lll}
                     \e^2 a_0(\tfrac{x}{\e}), & x \in \Omega_0^\e,
                    \\[0.3em]
                    a_1(\tfrac{x}{\e}), & x \in \Omega_1^\e,
                   \end{array}
\right.  & \hspace{.5cm} & \rho^\ep(x) = \rho(\tfrac{x}{\ep}), &\hspace{.5cm} &  \rho(y):=\left\{\begin{array}{lll}
\rho_0(y), & \text{$y\in Y_0$},
\\[0.3em]
\rho_1(y), & \text{$y\in Y_1$},
\end{array}
\right. 
\end{aligned}
\end{equation}
for  $a_j, a_j^{-1}, \rho_j, \rho_j^{-1} \in L^\infty(Y_j)$, $j=1,2$, periodic with period $1$. It is convenient to set $a_0\equiv 0$ on $Y_1$ and $a_1\equiv 0$ and $Y_0$, thus we can write, for example, $a^\e(x) = \e^2 a_0(x/\e) + a_1(x/\e)$.

For a positive Lebesgue-measurable function $w$ on a Borel set $B\subset{\mathbb R},$ such that $w, w^{-1} \in L^\infty(B)$, we employ the notation  $L^2_{w}(B)$ to indicate that the space $L^2(B)$ is equipped with the inner product 
$$
( \cdot, \cdot)_{_w} : =  \int_{B} w | \cdot |^2.
$$
For a bilinear form $\beta : H^1(\RR) \times H^1(\RR) \rightarrow \RR$,  the  (self-adjoint) operator $A$ associated to $\beta$ is densely defined in $L^2_w(\RR)$ by the action $A u = f,$ where for a given $u \in H^1(\RR)$, the function $f \in L^2(\RR)$ is the solution to the integral identity
$$
\beta(u, v ) = \int_{\RR}w f\overline{v} \qquad \forall v  \in H^1(\RR).
$$

For the bilinear form
\begin{equation*}
\begin{aligned}
\beta^\e (u,v) : = \int_\RR a^\e u'\overline{v'},\qquad u,v\in H^1(\RR),
\end{aligned}
\end{equation*}
we consider $A^\e,$ the operator defined in $L^2_{\rho^\e}(\RR)$ and associated to $\beta^\ep$. The spectrum $\sigma(A^\e)$ of $A^\e$ is absolutely continuous and, by introducing the rescaled Floquet-Bloch transform ${\mathcal U}_\varepsilon$, see \eqref{rsgelf}, we note that  $\sigma(A^\ep)$ admits the representation
 $$
 \sigma(A^\ep) = \bigcup_{\theta \in [0,2\pi)} \sigma( A^\ep_\theta),
 $$
 where $\sigma(A_\theta^\e)$ is the spectrum of the $L^2_{\rho}(Y)$ densely-defined self-adjoint operator $A^\ep_\theta$ associated to the form
 $$
\beta^\e_\theta (u,v) : = \int_Y\bigl(a_0 + \ep^{-2} a_1\bigr) u'\overline{v'},
 $$
acting in the space $H^1_\theta(Y)$ of functions $u\in H^1(Y)$ that are $\theta$-quasiperiodic, {\it i.e.} such that $u(y)=\exp({\rm i}\theta y)v(y),$ $y\in Y,$ for some $1$-periodic function $v\in H^1(Y)$. For each $\varepsilon,$ $\theta,$ the operator $A_\theta^\ep$ has compact resolvent and consequently its spectrum $\sigma(A^\ep_\theta)$ is discrete.

Consider the space 
\[
V_\theta:=\bigl\{u\in H^1_\theta(Y):\,u'=0 \mbox{ on }  Y_1\bigr\}
\] 
and its closure in $L^2_\rho(Y),$ which we denote by $\overline{V_\theta}.$ We introduce the densely defined operators $A_\theta : \overline{V_\theta} \rightarrow L^2_\rho(Y)$ given by $A_\theta u = f$ for $u,f$ such that
\begin{equation}
\label{Atheta}
 \int_{Y_0} a_0 u' \overline{v'} = \int_Y \rho f \overline{v} \qquad \forall v \in V_\theta.
\end{equation}
For each $\theta,$ the operator $A_\theta$ has compact resolvent and so $\sigma(A_\theta)$ is discrete. 
 In a recent work \cite{ChCoGu}, see Section \ref{sec:resolvent}, the spectrum $\sigma(A^\e)$ was shown to converge in the Hausdorff sense to  the union of  the spectra  of the operators $A_\theta,$ {\it i.e.} 
 \begin{equation}
 \lim_{\e \rightarrow 0} \sigma(A^\e) = {\bigcup_{\theta \in [0,2\pi)} \sigma( A_\theta)}.
 \label{limspectrum}
 \end{equation} 
 \begin{remark}\label{rem:2.1}
 	$\bigcup_{\theta \in [0,2\pi)} \sigma( A_\theta)$ can be seen as the spectrum of a certain operator $A^0$ unitary equivalent to the direct integral of operators $\int_\theta A_\theta$, see Appendix \ref{rem:limoperator} for the details.
 \end{remark}
 In Section \ref{sec:resolvent}, we construct infinite-order asymptotics (as $\varepsilon\to0$) for the resolvents of $A_{\theta}^\e$, uniform in $\theta,$ with respect to the $H^1$ norm and, in particular, prove the following refinement of the result established in \cite{ChCoGu}:
 \begin{theorem}
 \label{estlemma}
  The  operator $A^\e_\theta$ norm-resolvent converges to $A_\theta$, uniformly in $\theta$,  at the rate $\ep^2$. More precisely, there exists a constant $C>0$ such that
  $$
  \bigl\Vert( A^\e_\theta + 1)^{-1}f - (A_\theta + 1)^{-1} f\bigr\Vert_{L^2_\rho(Y)} \le C  {\e}^2 || f ||_{L^2_\rho(Y)}\ \  \qquad  \forall \theta \in [0,2\pi).
  $$
 \end{theorem}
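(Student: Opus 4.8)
The plan is to construct, for each $\theta$ and each $\e$, an explicit approximation $u_\e$ to the resolvent $(A_\theta^\e+1)^{-1}f$ built from the limit resolvent $(A_\theta+1)^{-1}f$ together with corrector terms supported (at leading order) in the soft part $Y_0$, and then to estimate the residual. Write $u_0 := (A_\theta+1)^{-1}f \in \overline{V_\theta}$, so that $u_0' = 0$ on $Y_1$ and $\int_{Y_0} a_0 u_0'\overline{v'} = \int_Y \rho(f-u_0)\overline{v}$ for all $v\in V_\theta$. The function $u_0$ is $\theta$-quasiperiodic and \emph{constant} on each component of $Y_1$ (in one cell, on the single interval $(h,1)$, with a quasiperiodicity matching condition across the soft layer). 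The obstruction to $u_0$ being an admissible competitor for $A_\theta^\e$ is that the form $\beta_\theta^\e$ sees the term $\e^{-2}\int_{Y_1} a_1 u'\overline{v'}$, which is fine for $u_0$ (its derivative vanishes on $Y_1$) but the resolvent equation for $A_\theta^\e$ forces a nonzero, $O(\e^2)$ derivative on $Y_1$. So I would seek $u_\e = u_0 + \e^2 u_1 + \dots$, where $u_1$ is determined by requiring that the $O(1)$ terms in the equation $\beta_\theta^\e(u_\e,v) + (u_\e,v)_\rho = (f,v)_\rho$ cancel: the leading correction $u_1$ solves a problem of the same type (an $A_\theta$-type equation with a new right-hand side coming from $-\int_{Y_1} a_1 u_1'\overline{v'}$-balancing), while on $Y_1$ one picks up the profile that makes $a_1 u_\e'$ continuous. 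In fact, since the paper advertises ``infinite-order asymptotics'', I expect the natural route is to define the full formal series $u_\e = \sum_{k\ge 0}\e^{2k} u_k$ with a recursion $u_{k}$ solving an $A_\theta$-problem with data built from $u_{k-1}$ on $Y_1$, show the recursion is solvable with uniform-in-$\theta$ bounds $\|u_k\|_{H^1(Y)} \le C^{k+1}\|f\|$, and truncate at order one for the stated $\e^2$ rate.

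Concretely, the key steps are: \textbf{(i)} Set up the weak formulation of $(A_\theta^\e+1)u_\e = f$ and of $(A_\theta+1)u_0=f$, noting the coercivity of $\beta_\theta^\e(\cdot,\cdot)+(\cdot,\cdot)_\rho$ on $H^1_\theta(Y)$ uniformly in $\e,\theta$ (with constant depending on $\e$ through the $\e^{-2}$ weight, but bounded below by $\min\{1,\|\rho^{-1}\|^{-1}_\infty\}$), so that $\|(A_\theta^\e+1)^{-1}\|_{L^2_\rho\to H^1}\le C$ independently of $\e,\theta$ --- this is the standard a priori bound. \textbf{(ii)} Build the corrector: given $u_0$, define $u_1$ on $Y_1$ so that $a_1 u_1' = -$ (the flux mismatch $a_0 u_0'$ evaluated at the interface, extended quasiperiodically), i.e. $u_1' = -a_1^{-1}\cdot(\text{const in } y \text{, depending on the trace of } a_0u_0')$ on $Y_1$; then extend $u_1$ to $Y_0$ by solving the $A_\theta$-type identity $\int_{Y_0}a_0 u_1'\overline{v'} = \int_Y\rho(-u_1)\overline v - \int_{Y_1}(\text{corrector source})\,\overline v$ for $v\in V_\theta$, with the compatibility/solvability condition handled by the projection onto $\overline{V_\theta}$ exactly as in the definition of $A_\theta$. \textbf{(iii)} Plug $u_\e = u_0+\e^2 u_1$ into $\beta_\theta^\e(u_\e,v)+(u_\e,v)_\rho$, and check that the residual $r_\e$ defined by $\beta_\theta^\e(u_\e,v)+(u_\e,v)_\rho = (f,v)_\rho + \langle r_\e,v\rangle$ satisfies $|\langle r_\e,v\rangle| \le C\e^2\|v\|_{H^1_\theta(Y)}$ uniformly in $\theta$ --- the surviving terms are $\e^2 (u_1,v)_\rho$, $\e^2 \int_{Y_0}a_0 u_1'\overline{v'}$, and an $O(\e^2)$ piece from $\int_{Y_1}a_1 u_1'\overline{v'}$, all controlled by the $H^1$ bound on $u_1$. \textbf{(iv)} Conclude by the a priori estimate of step (i): $\|u_\e - (A_\theta^\e+1)^{-1}f\|_{H^1} \le C\|r_\e\|_{(H^1_\theta)^*} \le C\e^2\|f\|$, and since $\|u_\e - u_0\|_{L^2_\rho} = \e^2\|u_1\|_{L^2_\rho}\le C\e^2\|f\|$, the triangle inequality gives the claim in $L^2_\rho(Y)$ (indeed in $H^1$).

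The main obstacle I anticipate is \textbf{the construction of the corrector $u_1$ and the verification that all estimates are genuinely uniform in $\theta$}. The delicate point is that the limit operator $A_\theta$ is defined on the \emph{closure} $\overline{V_\theta}$ in $L^2_\rho(Y)$, not on $V_\theta$ itself, so one must be careful about which components of the equation live in $V_\theta$ and which data require the orthogonal projection; the corrector source term generated on $Y_1$ must be checked to be compatible with this variational setup. Moreover, the quasiperiodicity parameter $\theta$ enters the trace of $a_0 u_0'$ at the interface $y=h$ and the matching at $y=0\sim 1$, and one must ensure that the constants appearing in $u_1$ on $Y_1$ are bounded as $\theta$ ranges over $[0,2\pi)$ --- this follows from the $\theta$-uniform coercivity, but requires a clean statement of how $A_\theta$ depends on $\theta$ (e.g. continuity of $\theta\mapsto(A_\theta+1)^{-1}$ in operator norm, or a direct energy estimate). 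If one wants the full infinite-order expansion rather than just the $\e^2$ truncation, the additional obstacle is proving the geometric-type bound $\|u_k\|_{H^1}\le C^{k+1}\|f\|$ on the recursion, which amounts to showing the solution map of the corrector problem has $\theta$-uniformly bounded norm; for the present statement, truncation at $k=1$ suffices and this complication can be avoided.
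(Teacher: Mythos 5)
Your overall strategy --- the ansatz $u_0+\e^2u_1$ with $u_0=(A_\theta+1)^{-1}f$, a corrector carried by the stiff part, a residual estimate, and the $\e$-uniform coercivity of $\beta^\e_\theta(\cdot,\cdot)+(\cdot,\cdot)_\rho$ --- is viable and is essentially the first-order truncation of the expansion constructed in Section \ref{sec:resolvent} (Theorem \ref{lem:asymp}). However, as written, your step (iii) does not close because the corrector is mis-specified. With the ansatz $u_0+\e^2u_1$ the stiff part of the form contributes $\e^{-2}\int_{Y_1}a_1(\e^2u_1)'\overline{v'}=\int_{Y_1}a_1u_1'\overline{v'}$ at order \emph{one}, and this must cancel exactly the order-one residual of $u_0$, which (integrating by parts on $Y_0$ and using $-(a_0u_0')'=\rho(f-u_0)$ there) equals $(a_0u_0')(h^-)\overline{v(h)}-(a_0u_0')(0^+)\overline{v(0)}+\int_{Y_1}\rho\,(u_0-f)\overline{v}$. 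Hence $a_1u_1'$ cannot be the constant extension of the interface flux: on $Y_1$ the corrector must solve $-(a_1u_1')'=\rho(f-u_0)$ with co-derivative equal to $(a_0u_0')(h^-)$ at $y=h$ and, up to the quasiperiodic phase, to $(a_0u_0')(0^+)$ at $y=1$; this is precisely the $n=1$ case of \eqref{u1eq}, and its solvability is exactly the statement that the residual functional annihilates $V_\theta$, i.e.\ the identity defining $u_0$ ({\it cf.} Lemma \ref{lemka} and \eqref{solvcond}). With your constant-flux choice the volume term $\int_{Y_1}\rho(u_0-f)\overline{v}$ survives at order one, the residual is $O(1)$ in the dual norm, and the coercivity argument yields nothing. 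Once the corrector is corrected, the rest of your plan does go through: in one dimension $(a_0u_0')'=\rho(u_0-f)\in L^2(Y_0)$, so $a_0u_0'$ is continuous on $\overline{Y_0}$ with a bound $C\Vert f\Vert_{L^2_\rho(Y)}$ uniform in $\theta$ by the energy estimate, whence $\Vert u_1\Vert_{H^1(Y)}\le C\Vert f\Vert_{L^2_\rho(Y)}$ uniformly in $\theta$, and your steps (i) and (iv) give the claimed $\e^2$ bound.

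It is also worth noting that the paper's proof of Theorem \ref{estlemma} is leaner than this and uses no corrector at all at leading order: writing $u^\e_\theta=u^{(0)}_\theta+R^\e_\theta$ with $u^{(0)}_\theta=(A_\theta+1)^{-1}f$, testing the remainder problem with $v\in V_\theta$ shows that $R^\e_\theta$ is orthogonal to $V_\theta$ in the graph norm \eqref{graphnorm}, and the $\theta$-uniform Poincar\'e-type inequality of Lemma \ref{lem:uniformtheta} (the key imported ingredient from \cite{ChCoGu}) then converts the energy identity, which gives $\e^{-2}\Vert\sqrt{a_1}(R^\e_\theta)'\Vert_{L^2(Y_1)}^2\le C\Vert f\Vert_{L^2_\rho(Y)}\,|||R^\e_\theta|||$, directly into $|||R^\e_\theta|||\le C\e^2\Vert f\Vert_{L^2_\rho(Y)}$. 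The corrector chain $u^{(2n)}_\theta\in V_\theta^\perp$ is needed only for the infinite-order expansion, where the same inequality supplies the geometric bounds $C_P^{2n}$ you anticipate. Your route instead buys the $\theta$-uniformity from elementary one-dimensional trace bounds on $a_0u_0'$, which is adequate here but is exactly the step that does not generalise beyond one dimension; the projection inequality is what makes the paper's argument uniform in $\theta$ and dimension-robust without hands-on corrector estimates.
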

  Consequently, since the spectra $\sigma(A^\e_\theta)$ and $\sigma(A_\theta)$ are discrete, we have the following result: for each $n \in \mathbb{N}$ there exists a constant $c_n >0$ such that
  $$
\bigl| \lambda^\ep_{n}(\theta) - \lambda_{ n }(\theta) \bigr| \le c_n \ep^2 \ \ \qquad \forall \theta \in [0,2\pi),\ \ \ \varepsilon \in (0,1).
   $$
 Here, $\{ \lambda^\e_{n}(\theta) \}_{n \in \mathbb{N}}$, $\{ \lambda_{n}(\theta) \}_{n \in \mathbb{N}}$ are the eigenvalue sequences of $A^\e_\theta$, $A_\theta,$ respectively, labelled in the increasing order\footnote{Notice that all the eigenvalues are simple due to the 1-dimensional nature of the corresponding problem.}. From this theorem  it follows that for sufficiently small $\ep$, the spectrum $\sigma(A^\e)$ has gaps if the set ${\bigcup_\theta \sigma( A_\theta)}$ contains gaps. 
 In Section \ref{sec:limspectrum} we demonstrate that this set contains infinitely many gaps. Furthermore, we demonstrate that $\lambda \in {\bigcup_\theta \sigma( A_\theta)}$ if and only if the inequality
 $$
 \Big|  v_1(h) + (a_0 v_2')(h) - \lambda v_2(h)\int_{Y_1} \rho_1 \Big| \le 2
 $$
 holds. Here $v_1$ and $v_2$ are the ($\lambda$-dependent) solutions of 
 \begin{equation*}
 - (a_0 v_j')' = \lambda \rho_0 v_j  \ \ \ \text{on $Y_0$},\ \ \ \ \ \ \ \ \ \ j=1,2,\\
 \end{equation*}
 subject to the conditions
 \begin{equation*}
 \left(\begin{array}{cc} v_1(0) & v_2(0)\\[0.4em] (a_0 v_1')(0) & (a_0 v_2')(0)\end{array}\right)
 = \left(\begin{array}{cc} 1 & 0\\[0.4em] 0 & 1\end{array}\right).
 \end{equation*}
\begin{remark}\label{rem2.3}
Note that any solution $u$ of  $- (a_0 u')' = \lambda \rho_0 u$ is absolutely continuous and so is its co-derivative $a_0 u'$. Hence, their value at any point $y$ is well defined (unlike the value of $a_0$ or $u'$ in general). This explains the use of notation $(a_0 v_j')(y)$, which we will hold to throughout the paper.
\end{remark}
 
 
Next, we introduce $d_-,d_+\in{\mathbb R}$ and on the set $D=(d_-,d_+)$ replace the coefficients \eqref{upcoefs} by some uniformly positive and bounded functions $a_D$, $\rho_D,$ namely we consider
\begin{equation*}
\begin{aligned}
 a_D^\e(x):=\left\{\begin{array}{lll}
                    a_D(x), & x\in D,
                    \\[0.3em]
                    a_1(\tfrac{x}{\e}), & x \in \Omega_1^\e \backslash D,
                    \\[0.3em]
                    \e^2 a_0(\tfrac{x}{\e}), & x \in \Omega_0^\e \backslash D, 
                   \end{array}
\right.  & \hspace{1cm} &  \rho_D^\e(x):=\left\{\begin{array}{lll}
\rho_D(x), & x\in D,
\\[0.3em]
\rho_1(\tfrac{x}{\e}), & x \in \Omega_1^\e \backslash D,
\\[0.3em]
\rho_0(\tfrac{x}{\e}), & x \in \Omega_0^\e \backslash D.
\end{array}
\right. 
\end{aligned}
\end{equation*}
We shall study the spectrum of the operator $A_D^\e$  defined in $L^2_{\rho_D^\e}(\RR)$  and associated to the form
\begin{equation}\label{weakdefectform}
\begin{aligned}
\beta_D^\e(u,v) : = \int_\RR a_D^\e u'\overline{v'}, \qquad u, v\in H^1(\R).
\end{aligned}
\end{equation}
As this operator arises from a compact  perturbation of the coefficients of $A^\ep$, it is well-known, see {\it e.g.} \cite{FiKl}, that the essential spectra of $A^\e_D$ and $A^\e$ coincide. For eigenvalues situated, for small values of $\varepsilon,$ in the gaps of the essential spectrum of $A^\e_D$ (equivalently, in the gaps of the essential spectrum of $\sigma(A^\e)$), we expect the eigenfunctions to be localised around the defect, and therefore we are interested in the analysis of  eigenfunctions of $A^\e_D$ corresponding to eigenvalues that are located in the gaps of the limit spectrum ${\bigcup_\theta \sigma(A_\theta)}$. We show that for the sequence of point spectra $\sigma_{\rm p}(A^\e_D)$ of the operators $A^\e_D,$ the set of accumulation points as $\varepsilon\to0$ that are located in the gaps 
of the limit spectrum $\lim_{\varepsilon\to0}\sigma(A^\varepsilon)$ is given by the intersection of the set ${\mathbb R}\backslash {\bigcup_\theta \sigma(A_\theta)}$ with the spectrum of the operator $A_{{\rm N},D}$ defined in $L^2_{\rho_{_D}}(D)$ and associated to the form 
\begin{equation}\label{A_N,D}
\begin{aligned}
\beta_{{\rm N},D}(u,v) : = \int_D a_D u'\overline{v'},\qquad u, v\in H^1(D).
\end{aligned}
\end{equation}
The functions from the domain of $A_{{\rm N},D}$ satisfy the Neumann condition on the boundary of $D.$ 

Conversely, if we choose the defect $D$ so that the spectrum $\sigma(A_{{\rm N},D})$ has a non-empty intersection with $\mathbb{R} \backslash {\bigcup_\theta \sigma(A_\theta)}$, then for sufficiently small $\e$ the operator $A^\e_D$ has non-empty point spectrum. Notice that we can always choose $a_D$, $\rho_D$, $d_-$ and $d_+$ such that this is true. Moreover, we demonstrate that for eigenvalue sequences that converge to a point in $\mathbb{R} \backslash {\bigcup_\theta \sigma(A_\theta)}$ the corresponding eigenfunctions are localised to a small neighbourhood of the defect.  Namely, the eigenfunctions $u_\e$  exhibit accelerated exponential decay  outside the defect in the sense that the function 
$\exp\bigl({\rm dist}(x, D)\nu/\e\bigr)u_\e(x),$ $x\in{\mathbb R},$ is an element of $L^2(\mathbb{R}\backslash D)$ for sufficiently small $\e,$ where the value $\nu$ is determined by the distance of the limit point of $\lambda_\e$ to the set ${\bigcup_\theta \sigma(A_\theta)}$.
These results are collated in the following theorem, which we prove in Sections \ref{sec:formalasymp}, \ref{sec:uppersemicont}, \ref{sec:decay}.
\begin{theorem}
\label{mainthm} $\ $

\begin{enumerate}
\item{For every  $\lambda_0 \in\sigma(A_{{\rm N},D})\backslash\bigl({\bigcup_\theta \sigma(A_\theta)}\bigr)$ (which is always simple) there exist a unique\footnote{By this we mean asymptotically unique, {\it i.e.} if there are two sequences $\l_\e$ and $\l_\e'$ converging to $\l_0$ then necessary $\l_\e = \l_\e'$ for small enough $\e$.} sequence of simple eigenvalues $\l_\e$ of $A^\e_D$  converging to $\l_0$ and constants $C_1,C_2>0$ such that 
\begin{equation}\label{2.6}
	\begin{aligned}
	&	|\lambda_\e - \l_0| \le C_1 \ep, 
	\\
& 	\|u_\e - u_0\|_{L^2(D)} \leq C_1 \e^{1/2}, \\
	&	\|u_\e \|_{L^2(\R \backslash D)} \leq C_2 \e^{1/2},
	\end{aligned}
\end{equation}
	where $u_0$ and $u_\e$ are the eigenfunctions of  $\l_0$ and $\l_\e$ respectively.
}
\item{ Conversely,  for any sequence $\lambda_\e \in \sigma_p(A^\e_D)$
such that $\lambda_\e \rightarrow \lambda_0\notin\lim\limits_{\e \rightarrow 0} \sigma(A^\e)= { \bigcup_\theta \sigma(A_\theta)}$, one has 
$\lambda_0 \in \sigma(A_{{\rm N},D})$.}
\item{Furthermore, the  $L^2(\mathbb{R})$-normalised eigenfunctions $u_\e$ of $A_D^\varepsilon$ corresponding to the eigenvalues 
$\lambda_\e$ are localised to the defect in the following sense: for $\nu >0$, let  $g_{\nu/\e}$ denote the exponentially growing function
			\begin{equation}\label{2.4}
			g_{\nu/\e}(x):= \left\{\begin{array}{lll}
			1, & x\in D,
			\\[0.3em]
			\exp\big( \frac{\nu}{\e} \dist(x,D)\big), & x\in \R\setminus D, 
			\end{array}\right.
			\end{equation}
and take	$\mu_1$ to be the smallest by the absolute value root of the quadratic function
		        \begin{equation}
			q(\mu):=\mu^2 - \biggl( v_1(h) + (a_0v_2')(h) - \lambda_0 v_2(h) \int_{Y_1} \rho_1 \biggr)\mu + 1.
			\label{qdef}
			\end{equation}	
Then,  for sufficiently small values of $\e$  
the function $g_{\nu/\e}\, u_\e$ is an element of $L^2(\mathbb R)$ for all  $\nu < |\ln | \mu_1| |$.

}
\end{enumerate}
\end{theorem}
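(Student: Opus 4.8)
The plan is to prove the three parts essentially in the order stated, exploiting the norm-resolvent estimate of Theorem~\ref{estlemma} and a two-scale/transfer-matrix analysis on the periodic part of the line.

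For part (1) and the first part of (2), I would set up a variational comparison between $A^\e_D$ and the ``limit'' operator $A_{{\rm N},D}$ acting on the defect only. The natural device is to use the quasiperiodic decomposition away from $D$: outside the defect the equation $-(a_D^\e u')' = \lambda \rho_D^\e u$ coincides with the purely periodic one, so an eigenfunction of $A^\e_D$ restricted to $\R\setminus D$ is governed by the periodic transfer (monodromy) matrix over one $\e$-cell. Writing the transfer matrix $M_\e(\lambda)$ across a period and using the conditions at $y=0$ written in the excerpt, one finds that $M_\e(\lambda)$ has trace $v_1(h) + (a_0v_2')(h) - \lambda v_2(h)\int_{Y_1}\rho_1 + o(1)$ (this is exactly the quantity appearing in $q(\mu)$ and in the characterisation of $\bigcup_\theta\sigma(A_\theta)$), so for $\lambda_0$ in a gap the eigenvalues of $M_\e$ are $\mu_1,\mu_1^{-1}$ with $|\mu_1|<1$: the only $L^2$ solution on each half-line is the decaying Floquet solution. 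I would then build an approximate eigenfunction by gluing $u_0$ (extended by its Neumann data) on $D$ to the decaying Floquet solutions on the two half-lines, estimate the defect in the integral identity \eqref{weakdefectform}, and invoke the spectral-gap stability (a consequence of Theorem~\ref{estlemma}, which pins the essential spectrum away from $\lambda_0$ with $O(\e^2)$ accuracy) together with simplicity of $\lambda_0$ to conclude existence, uniqueness, the rate $|\lambda_\e-\lambda_0|\le C_1\e$, and the $\e^{1/2}$ bounds on $\|u_\e-u_0\|_{L^2(D)}$ and $\|u_\e\|_{L^2(\R\setminus D)}$. For the converse statement in (2), given $\lambda_\e\to\lambda_0\notin\bigcup_\theta\sigma(A_\theta)$ I would show the $L^2$ mass of $u_\e$ escaping $D$ tends to zero (again because off $D$ the solution must be the decaying Floquet mode, whose total mass over $\R\setminus D$ is $O(\e)$ relative to its value near $\partial D$), pass to the limit in the weak formulation restricted to $D$, and identify the limit of $u_\e|_D$ as an eigenfunction of $A_{{\rm N},D}$ with eigenvalue $\lambda_0$ — the Neumann condition emerging precisely because the flux $a_D^\e u_\e'$ at $\partial D$ is carried off by the vanishing exterior tail.

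For part (3), the localisation estimate, I would argue cell-by-cell on $\R\setminus D$. On the $z$-th period cell to the right of $d_+$, the restriction of $u_\e$ is a combination of the two Floquet solutions; since $u_\e\in L^2(\R)$ the growing mode is absent, so on that cell $u_\e$ is (a multiple of) the decaying Floquet solution, whence its $L^2$ norm over the $z$-th cell is $|\mu_1|^{z}$ times its norm over the first cell, up to constants uniform in $\e$ and $z$ (here one uses that the within-cell profile of the Floquet solution is controlled uniformly because $\lambda_0$ is at a fixed distance from $\bigcup_\theta\sigma(A_\theta)$, so $\mu_1$ is bounded away from the unit circle). Since the $z$-th cell sits at distance $\approx \e z$ from $D$, we get $\|u_\e\|_{L^2(\text{cell }z)}^2 \lesssim |\mu_1|^{2z} = \exp(2z\ln|\mu_1|) = \exp(-2\,\dist(x,D)\,|\ln|\mu_1||/\e)$ on that cell. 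Multiplying by $g_{\nu/\e}^2 = \exp(2\nu\,\dist(x,D)/\e)$ and summing the geometric series over $z$ gives a finite bound precisely when $\nu < |\ln|\mu_1||$, which is the claim; the same argument applies to the left of $d_-$.

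The main obstacle, and the technical heart of the argument, is making the transfer-matrix/Floquet analysis uniform in $\e$ near a fixed $\lambda_0$: one must show that the trace of the monodromy matrix $M_\e(\lambda_\e)$ converges to the limiting expression in $q(\mu)$ with enough control (and that the associated eigenvector — the decaying Floquet mode — has a within-cell profile bounded above and below uniformly in $\e$), despite the coefficient $\e^{-2}a_1$ blowing up on $\Omega_1^\e$. This is where the high-contrast scaling genuinely enters: on the stiff layers $Y_1$ the eigenfunction is nearly constant (derivative of order $\e$), while on the soft layers $Y_0$ it solves the $\e$-independent problem $-(a_0v')'=\lambda\rho_0 v$, so the monodromy matrix factorises into an $\e$-independent ``soft'' part and a ``stiff'' part that converges; the decay rate $|\mu_1|$ is an $O(1)$ quantity and its $\e$-dependence is a perturbation of size $O(\e)$. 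Carrying this out rigorously, and propagating the uniformity through the gluing construction and the cell-by-cell summation, is the crux; the rest is standard spectral perturbation theory made quantitative by Theorem~\ref{estlemma}.
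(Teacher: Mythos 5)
Your proposal is sound in outline and, for Claim 3, it is essentially the paper's own argument (Section \ref{sec:decay}): monodromy matrix over one $\e$-cell, convergence of its trace $h_\e$ to $v_1(h)+(a_0v_2')(h)-\lambda_0 v_2(h)\int_{Y_1}\rho_1$ (Lemma \ref{l6.1}), selection of the decaying Floquet mode by the $L^2$ condition, and $\mu_1^\e\to\mu_1$. Where you genuinely diverge is in Claims 1 and 2. For Claim 1 the paper does not glue the exact decaying $\e$-Floquet solutions to $u_0$; it builds a two-scale approximate eigenfunction $u_{\e,\rm ap}$ with correctors $\e u_1$ on $D$ and $\e^2 w_2$ outside, precisely so that the co-derivative is continuous, $u_{\e,\rm ap}\in{\rm dom}(A_D^\e)$, and the residual is $O(\e)$ in $L^2$, after which the spectral theorem gives ${\rm dist}(\lambda_0,\sigma(A_D^\e))\le C\e$. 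Your gluing leaves an $O(\e)$ jump of the co-derivative at $\partial D$, so you must work in the form sense; this is repairable, but note that the residual must be measured against the graph norm of the (degenerate) form $\beta_D^\e$, not the full $H^1(\R)$ norm — it works only because the mismatch sits at $\partial D$, where the trace is controlled through $H^1(D)$ and the coefficients are nondegenerate. For Claim 2 (and for the uniqueness and the $\e^{1/2}$ bounds in Claim 1, which in the paper also hinge on it) the paper deliberately avoids the one-dimensional structure: Theorem \ref{th5.1} decomposes $u_\e=v_\e+w_\e$ with $v_\e$ piecewise constant on the stiff component, proves $\|w_\e\|_{L^2}\le C\e^2$, $\|w_\e'\|_{L^2}\le C\e$, and then shows localisation via a Weyl-criterion-for-forms argument in the Floquet--Bloch representation of the limit operator $A^0$ (the spaces $\mathfrak h^\pm$ of Appendix B), using only that $\lambda_0$ stays at a positive distance from all $\lambda_n(\theta)$. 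Your route instead forces the exterior part of a genuine eigenfunction to be the decaying $\e$-Floquet mode and sums a geometric series; this is legitimate in 1D and yields the $O(\e^{1/2})$ exterior bound and the vanishing boundary flux directly, but it leans entirely on the uniform transfer-matrix control (your ``crux''), which is exactly the content of Lemma \ref{l6.1}, and it would not extend beyond one dimension — the trade-off the authors explicitly flag when motivating their functional-analytic Section \ref{sec:uppersemicont}. Finally, be aware that asymptotic uniqueness of the eigenvalue branch does not follow from simplicity of $\lambda_0$ alone: you need the strong $L^2(\R)$ compactness of normalised eigenfunctions (supplied by your part-(2) analysis, or by Theorem \ref{th5.1} in the paper) to rule out two orthogonal branches converging to the same simple limit eigenfunction, so that step must logically precede the conclusion of Claim 1, as it does in the paper.
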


\section{The limit spectrum of the unperturbed operator}
\label{sec:limspectrum}
Here we quantitatively characterise the spectrum ({\it cf.} (\ref{limspectrum})) 
$$
{\bigcup_{\theta \in [0,2\pi)} \sigma( A_\theta)}
$$
and establish criteria for the existence of spectral gaps. To this end we consider the eigenvalue problem: find $\lambda \in [0,\infty)$ and $u \in V_\theta=\bigl\{v\in H^1_\theta(Y):\,v'\equiv0 \mbox{ on }  Y_1\bigr\}$ such that
\begin{equation}\label{7.76}
\int\limits_0^h a_0 u' \overline{v'} 
= \l\int\limits_0^1 \rho u \overline{v} 
\qquad \forall v\in V_\theta.
\end{equation}
By taking test functions $v\in C^\infty_0(Y_0)$ we deduce that $u \vert_{Y_0}$ is a weak solution to the equation
\begin{equation}
-(a_0 u')'=\lambda\rho_0 u
\label{strongform}
\end{equation}
on $Y_0.$
For $L^\infty$-functions $a_0$ and $\rho$ the equation (\ref{strongform}) holds pointwise almost everywhere and by integrating by parts in \eqref{7.76} we deduce that
$$
(a_0u')(h^-)\,\overline{v(h)} - (a_0u')(0^+)\,\overline{v(0)}  = \lambda \int_{Y_1} \rho_1 u \overline{ v} \qquad \forall v \in V_\theta.
$$ 
Here $ f(z^+): = \lim\limits_{x \searrow z}f(x),$ and $f(z^-) := \lim\limits_{x \nearrow z}f (x) $ for a function $f,$ whenever the corresponding limit exists. Since any element $v\in V_\theta$ satisfies $v(y)= e^{{\rm i} \theta } v(0),$ $y\in Y_1,$  the above observations imply that $u$ satisfies \eqref{7.76} if, and only if, $w = u\vert_{Y_0} \in H^1(Y_0)$ is a weak solution of the problem
\begin{equation}
\label{equivlim}
\left\{
\begin{aligned}
& - (a_0 u')' = \lambda \rho_0 u \quad  \quad\text{in $Y_0$},
 \\[0.3em]
& u(h)  = e^{{\rm i} \theta } u(0),   
\\
& e^{-{\rm i} \theta }(a_0u')(h^-) -(a_0 u')(0^+)=\lambda u(0) \int_{Y_1} \rho_1.
\end{aligned}
\right.
\end{equation}

We now describe the solutions to  \eqref{equivlim},  equivalently \eqref{7.76}.
\subsection{Representation via a fundamental system}
\label{sec:transfer}
Due to the existence and uniqueness theorem for linear first order systems with locally integrable coefficients, see e.g. \cite{Zettl}, there exist a fundamental system of solutions $v_1$ and $v_2$ to the equation $- (a_0 u')' = \lambda \rho_0 u$  such that
\begin{equation}\label{7.80}
\left(\begin{array}{cc} v_1(0) & v_2(0) \\[0.4em] (a_0 v_1')(0) & (a_0 v_2')(0)\end{array}\right)
= \left(\begin{array}{cc} 1 & 0\\[0.4em] 0 & 1\end{array}\right),
\end{equation} 
cf. Remark \ref{rem2.3}.
The Wronskian of the system is constant:
\begin{equation}\label{7.81}
v_1(y)(a_0 v_2')(y) - v_2(y)(a_0 v_1')(y)=1,\,\, y\in Y_0,
\end{equation}
and any solution $u$ to \eqref{equivlim} is of the form
$
u = c_1 v_1 + c_2 v_2
$
for some $c_1, c_2\in{\mathbb C}.$ 

The substitution of the above representation for $u$ in terms of $v_1,$ $v_2$ into the second and third equations of \eqref{equivlim} leads to the system
\begin{equation}
\left( \begin{matrix}
v_1(h) -  e^{{\rm i} \theta }  & & v_2(h) \\[5pt]
(a_0 v_1')(h) -  e^{{\rm i} \theta } \lambda \int_{Y_1} \rho_1 & & (a_0 v_2')(h) - e^{{\rm i} \theta }
\end{matrix} \right) \left( \begin{matrix}
c_1 \\ c_2
\end{matrix} \right) = \left( \begin{matrix}
0 \\ 0
\end{matrix} \right).
\label{csystem}
\end{equation}
For the existence of a non-trivial solution $(c_1,c_2)$ to (\ref{csystem}), and therefore non-trivial $u$ in \eqref{equivlim}, the value $\lambda$ must necessarily solve the equation
$$
2\cos ( \theta ) = v_1(h) + (a_0 v_2')(h) - \lambda v_2(h)\int_{Y_1} \rho_1.
$$
Hence, the set ({\it cf.} (\ref{limspectrum}))
$$
{\bigcup_{\theta\in [0,2\pi)} \sigma( A_\theta)}
$$
consists of all non-negative $\lambda$ such that the following inequality holds:
\begin{equation}
 \bigg|  v_1(h) + (a_0 v_2')(h) - \lambda v_2(h)\int_{Y_1} \rho_1 \bigg| \le 2.
 \label{specineq}
\end{equation}
As noted above, the functions $v_1, v_2$ depend on the spectral parameter $\l$. We demonstrate the implications of this dependence for the structure of the limit spectrum through the following simple example. Assume that $a_0,\rho_0$ and $\rho_1$ are equal to unity on their support, then $v_1 = \cos (\sqrt{\l} y)$, $v_2 = \frac{1}{\sqrt{\l}}\sin (\sqrt{\l} y)$, and the limit spectrum is given by
\begin{equation*}
\bigg|  2 \cos (\sqrt{\l} h)  - \sqrt{\lambda} \sin (\sqrt{\l} h) (1-h) \bigg| \le 2.
\end{equation*}
In particular we see that the bands of the limit spectrum become very narrow as $\l \to \infty$.

\subsection{Representation via a spectral decomposition}

Consider the operator $\tilde A_\theta$ defined on $L^2_{\rho_0}(Y_0)$ and associated to the form 
$$
\tilde\beta_\theta(u,v) : = \int_{Y_0} a_0 u' \overline{v'}, \qquad u, v \in H^1_\theta(Y_0),
$$
in the sense of procedure described in Section \ref{sec:probform}.
By virtue of the fact that the operator $\tilde A_\theta$ has compact resolvent, its $L^2_{\rho_0}(Y_0)$-orthonormal sequence of eigenfunctions $\{ \Phi^{(n)}_\theta\}_{n \in \N}$ is complete in the space $L^2_{\rho_0}(Y)$. We denote by $\mu_n(\theta)$, $n\in \N$, the eigenvalues of $\Phi^{(n)}_\theta \in H^1_\theta(Y_0)$: 
\begin{equation}
\label{examplee1}
\begin{aligned}
\int_{Y_0} a_0\bigl(\Phi^{(n)}_\theta\bigr)'\overline{v'} = \mu_n(\theta) \int_{Y_0} \rho_0 \Phi^{(n)}_\theta \overline{v} & \qquad & \forall v \in H^1_\theta(Y_0).
\end{aligned}
\end{equation}
Multiplying the first equation in (\ref{equivlim}) by $\overline{\Phi^{(n)}_\theta}$ and integrating by parts we have
\begin{flalign*}
\l \int_{Y_0} \rho_0 u \overline{\Phi^{(n)}_\theta}= -\int_{Y_0} (a_0u')' \overline{\Phi^{(n)}_\theta}&= -\big((a_0 u')(h^-)\overline{\Phi^{(n)}_\theta(h)}-
(a_0 u')(0^+)\overline{\Phi^{(n)}_\theta(0)}\big) + \int_{Y_0} a_0u'\,\overline{\big({\Phi^{(n)}_\theta}\big)'} 
\\
& =-\left( e^{-{\rm i} \theta } (a_0u')(h^-)  - (a_0u')(0^+) \right) \overline{\Phi^{(n)}_\theta(0)}   + \mu_{n}(\theta) \int_{Y_0} \rho_0 u\,\overline{\Phi^{(n)}_\theta}.
\end{flalign*}
The third equation in  (\ref{equivlim}) implies
$$
\bigl(\mu_{n}(\theta)-\l\bigr) \int_{Y_0} \rho_0 u\,\overline{\Phi^{(n)}_\theta} = \lambda u(0) \overline{\Phi^{(n)}_\theta(0)} \int_{Y_1} \rho_1
$$
 Therefore, upon performing a spectral decomposition of $u$ in terms of $\Phi^{(n)}_\theta,$ {\it i.e.} setting
$$
\begin{aligned}u = \sum_{n \in \N} \zeta_n \Phi^{(n)}_\theta, & \qquad& \zeta_n = \int_{Y_0} \rho_0 u\,\overline{\Phi^{(n)}_\theta}, 
\end{aligned}
$$
we see that
$$
\zeta_n = \frac{\lambda}{\mu_{n}(\theta)- \lambda} u(0) \overline{\Phi^{(n)}_\theta(0)}\int_{Y_1} \rho_1,\ \ \ \ n\in{\mathbb N}.
$$
In particular, one has $u(0) = \sum_{n \in \N} \zeta_n \Phi^{(n)}_\theta(0)$. Thus we arrive at the following alternative description of the limit spectrum: $\lambda\in{\bigcup_\theta \sigma(A_\theta)}$ if and only if there exist $\theta \in  [0,2\pi)$ such that 
$$
 \sum_{n \in \N} \frac{\lambda}{\mu_{n}(\theta)- \lambda}\bigl| \Phi^{(n)}_\theta(0)\bigr|^2  =\left( \int_{Y_1} \rho_1 \right)^{-1}.
$$
\section{Asymptotics of the defect eigenvalue problem}
\label{sec:formalasymp}
Suppose $\lambda_\ep$, $u_\ep$ is an eigenvalue-eigenfunction pair for the defect problem, that is
\begin{equation}
\label{formale1}
\begin{aligned}
-(a_D^\e u_\e')' & = \lambda_\e \rho_D^\e u_\e \quad \text{on $\mathbb{R}$}, \\ 
\end{aligned} 
\end{equation}
where $u_\ep$ is continuous, subject to the interface conditions
\begin{equation}
\label{formale2}
 a_D u'_\e \big|_{ D} = a^\e_D u'_\e \big|_{\RR \backslash D}  \quad {\rm on}\ \ \{d_-, d_+\}.
\end{equation}
and
\begin{equation}
\label{formale3}
 a_1 u'_\e \big|_{\Omega^\e_1 \backslash D} = \ep^2 a_0 u'_\e \big|_{\Omega^\e_0 \backslash D} \quad {\rm on}\ \ \bigl\{x\in\RR \backslash D : x = \ep(z+h)\ {\rm or}\ x = \e z\ \ {\rm for\ some}\ z \in \mathbb{Z}\bigr\}.
\end{equation}
In this section we study the behaviour with respect to $\e$ of the eigenvalues $\lambda_\e$ and  eigenfunctions $u_\e,$ using asymptotic expansions. We show that, up to the leading order, the values of $\lambda_\e$ are described by an eigenvalue of the weighted Neumann-Laplacian on the defect $D$, see (\ref{limnue}) below. More precisely, we show that for each eigenvalue $\l_0$ of (\ref{limnue}) in a gap of $\bigcup_\theta \sigma(A_\theta)$, there exists a sequence of eigenvalues $\lambda_\e$ of (\ref{formale1}) converging to $\lambda_0$. However, it remains unclear whether every accumulation point of $\l_\e$ inside a gap of $\bigcup_\theta \sigma(A_\theta)$ belongs to the spectrum of (\ref{limnue}). We address this question in Section \ref{sec:uppersemicont}, where we argue that the eigenmodes $u_\ep$ are asymptotically localised to the defect. The latter observation implies compactness of the sequences of eigenmodes $u_\ep$, thus establishing asymptotic one to one correspondence between the eigenvalues of (\ref{formale1}) and (\ref{limnue}) in the gaps of $\bigcup_\theta \sigma(A_\theta)$.

We seek asymptotic expansions for the eigenvalues $\lambda_\varepsilon$ and eigenfunctions $u_\varepsilon$ of \eqref{formale1}--\eqref{formale3} in the form
\begin{equation}
\label{lambdaexpan}
\lambda_\e = \lambda_0 + \e\l_1+ \ep^2 \lambda_2 \ldots,
\end{equation}
with 
\begin{equation}
\label{ndexp}
u_\e(x)=\left\{\begin{array}{ll}u_0(x) + \e u_1(x) + \e^2 u_2(x) + \ldots, \qquad x \in (d_-,d_+), \\[0.5em] 
w_0(\tfrac{x}{\e})  + \e^2 w_2(\tfrac{x}{\e}) + \ldots,  \qquad x \in (-\infty,d_-)\cup(d_+,\infty).\end{array}\right.
\end{equation}
We assume that functions $w_{2i}$, $u_i$, $i=0,1,2,\ldots$, are continuous.
Substituting \eqref{lambdaexpan}, \eqref{ndexp} into \eqref{formale1} and \eqref{formale2} and equating the $\ep^0$-coefficient  on the defect gives
\begin{equation}\label{limnue}
\left\{ \begin{aligned}
-(a_D u_0')' &= \lambda_0 \rho_D u_0 \qquad & {\rm on}\ (d_-,d_+), \\[0.3em]
a_D u_0' |_D &= 0 \qquad & {\rm on}\ \{d_-, d_+\},
\end{aligned} \right.
\end{equation}
that is, $\lambda_0$ is an eigenvalue of the weighted Neumann-Laplace operator $A_{{\rm N}, D}$ on the defect, {\it cf.} (\ref{A_N,D}). Note that this is true regardless of whether $d_-,$ $d_+$ belong to $\Omega^\e_1$ or $\Omega^\e_0$. We fix $u_0$ by setting $\|u_0\|_{L^2_{\rho_D}(D)}=1$.

For $c\in{\mathbb R}$, let  $\lfloor c \rfloor_\e$ and $\lceil c \rceil_\e$ denote the largest integer $z$ such that $\e z  \le c$ and the smallest integer $z$ such that $c\le \e z,$ respectively.  Substituting \eqref{lambdaexpan}, \eqref{ndexp} into \eqref{formale1},  \eqref{formale3}  and comparing the coefficients for different powers of $\e$ in the resulting expression yields
\begin{equation}
\label{uoe0} \left\{
\begin{aligned}
-(a_1w_0')'   = 0, & \hspace{.5cm} & {\rm on}\  Y_1+z , \\[0.3em]
 a_1w_0' \big \vert_{Y_1+z}  = 0,  & \hspace{.5cm} & {\rm on}\ \{z+h, z+1\},  
\end{aligned} \right.
\end{equation}
and 
\begin{equation}
\label{uoe1} \left\{
\begin{aligned}
-(a_0w_0')'   = \lambda_0 \rho_0 w_0, & \hspace{.5cm} & {\rm on}\  Y_0+z  ,  \\[0.35em]
-(a_1w_2')'   = \lambda_0 \rho_1 w_0, & \hspace{.5cm} & {\rm on}\  Y_1+z , \\[0.3em]
a_1w_2' \big \vert_{Y_1+z}    = a_0 w_0'\big \vert_{Y_0+z} ,  & \hspace{.5cm} & {\rm on}\ \{z+h, z+1\},
\end{aligned} \right.
\end{equation}
for all  
\begin{equation}
z \in \mathcal{I}_\e : = \bigl\{ z \in \mathbb{Z} :  z \ge  \lceil d_+ \rceil_\e \text{ or }  z \le   \lfloor d_- \rfloor_\e - 1 \bigr\}.
\label{triangle}
\end{equation}
The assertion \eqref{uoe0} implies that $a_1w_0' \equiv 0$ on $ Y_1+z  $ and therefore $w_0$ is constant on each such interval. By the second equation of \eqref{uoe1}, and the fact $w_0$ is constant on each interval $ Y_1+z $, the function $a_1w_2'$ has the form
\begin{equation}
(a_1w_2')(y) = (a_1w_2')(z+h)-\lambda_0 w_0(z+h)\int\limits_{z+h}^y \rho_1,\ \ \ \ \ y\in Y_1+z .
\label{star}
\end{equation}
Combining (\ref{star}), the fact $w_0$ is constant on $Y_1+z$ and the first and last equations of \eqref{uoe1} implies that for all $z\in\mathcal{I}_\e$ one has
\begin{equation}
\label{effectivetransferproblem}
\left\{\begin{array}{lcc}
-(a_0w_0')'   = \lambda_0 \rho_0 w_0, \qquad {\rm on}\  Y_0+z  , & \hspace{1cm}&\ \\[0.9em]
w_0 \equiv w_0(z+h) = w_0(z+1) ,  \qquad {\rm on}\  Y_1+z , & & \ \\[0.6em]
(a_0w_0')\big( (z+1)^+\big) - (a_0w_0') \big( (z+h)^-\big) = - \lambda_0 w_0(z+h) \int\limits_{Y_1} \rho_1. & & \ 
\end{array}  \right.
\end{equation}
The problem (\ref{effectivetransferproblem}) fully governs the behaviour of $w_0$ in $\RR \backslash ( \lfloor d_{-} \rfloor_\ep -1 ,  \lceil d_+ \rceil_\ep )$. We can utilise the fundamental system $(v_1,v_2)$  from Section \ref{sec:transfer} to quantitatively characterise $w_0$. Indeed, since in each cell $Y+z$ any solution to the first equation in \eqref{effectivetransferproblem} is a linear combination of $v_1$ and $v_2,$ one has
\begin{equation}
\label{u0+}
w_0(y) = \left\{ \begin{array}{lr}
 l_z v_1(y-z) + m_z v_2(y-z), & y \in  Y_0+z  , \\[0.5em]
  l_z v_1(h) + m_z v_2(h), & y \in  Y_1+z  ,
\end{array} \right.
\end{equation}
for constants $l_z, m_z$, $z \in \mathcal{I}_\e,$ where the expression on $Y_1+z$ follows from the second condition in (\ref{star}). Using the continuity of $w_0$ and the  jump of the co-derivative condition from (\ref{effectivetransferproblem}) it is not difficult to derive the following recurrence relation:
\begin{equation}\label{u0=2}
\left( \begin{matrix}
l_{z+1} \\ m_{z+1}
\end{matrix}  \right) =  \left( \begin{matrix}
v_1(h) & & v_2(h) \\[5pt]
(a_0 v_1')(h) - \lambda_0 v_1(h) \int_{Y_1} \rho_1 & & (a_0 v_2')(h) - \lambda_0 v_2(h) \int_{Y_1} \rho_1
\end{matrix}  \right) \left( \begin{matrix}
l_{z} \\ m_{z}
\end{matrix}  \right).
\end{equation}
Now, recalling the Wronskian property \eqref{7.81}, we find that the characteristic polynomial $q$ of the matrix in (\ref{u0=2}) is ({\it cf.} (\ref{qdef}))
$$
q(\mu)=\mu^2 - \Big( v_1(h) + (a_0v_2')(h) - \lambda_0 v_2(h) \int_{Y_1} \rho_1 \Big) \mu + 1.
$$
The roots $\mu_1$, $\mu_2$ of $q$ satisfy the identity $\mu_1 \mu_2 = 1$ and the nature of $w_0$ as it varies from one period to the next is determined by the quantity  $v_1(h) + (a_0v_2')(h) - \lambda_0 v_2(h) \int_{Y_1} \rho_1  $. Namely, if ({\it cf.} (\ref{specineq})) 
$$
\left| v_1(h) + (a_0v_2')(h) - \lambda_0 v_2(h) \int_{Y_1} \rho_1 \right|  \le 2,
$$ then the roots $\mu_1$, $\mu_2$ are complex conjugate with $| \mu_1 | = | \mu_2 | =1$ and solutions $w_0$ are described by the linear span of two quasi-periodic functions with phase difference $\pi$. In Section \ref{sec:limspectrum} we demonstrated that $\lambda_0$ satisfies this constraint if and only if $\lambda_0$ belongs to the limit spectrum 
$$
\lim_\e \sigma(A^\e) = \bigcup_\theta \sigma(A_\theta).
$$
For $\lambda_0$ in the gaps of this limit spectrum, {\it i.e.} when $\lambda_0$ satisfies the inequality
$$
\bigg|  v_1(h) + (a_0v_2')(h) - \lambda_0 v_2(h) \int_{Y_1} \rho_1 \bigg| > 2,
$$
the roots $\mu_1, \mu_2$ satisfy $|\mu_1| <1$ and $|\mu_2|>1$. For such $\lambda_0$,  we can construct ``unstable" solutions, one of which decays and the other grows. Indeed, denoting by $\varkappa_1$ and $\varkappa_2$ the eigenvectors corresponding to $\mu_1$ and $\mu_2$ respectively, we find in  the interval $[\lceil d_+ \rceil_\e, \infty)$ that $w_0$ given by \eqref{u0+}, \eqref{u0=2} satisfies $w_0(y+1) = \mu_j w_0(y)$ if $(l_{\lceil d_+ \rceil_\ep} , m_{\lceil d_+ \rceil_\ep} ) = \varkappa_j,$ $j=1,2.$ Similarly, in the interval $(-\infty, \lfloor d_- \rfloor_\e]$,  one has $w_0(y) = \mu_j^{-1} w_0(y-1)$ if $(l_{\lfloor d_- \rfloor_\e-1} , m_{\lfloor d_- \rfloor_\e-1} ) =\varkappa_j,$ $j=1,2.$ For $w_0$ to decay to the left and right of the defect, we set  $(l_{\lceil d_+ \rceil_\ep} , m_{\lceil d_+ \rceil_\ep} ) = \varkappa_1$ and  $(l_{\lfloor d_- \rfloor_\e-1} , m_{\lfloor d_- \rfloor_\e-1} ) = \varkappa_2$. In this way we ensure that 
\begin{equation}\label{4.26}
\begin{aligned}
&	w_0(y+1) = \mu_1 w_0(y) & \mbox{for } & y \in [\lceil d_+ \rceil_\e , \infty), \\
& w_0(y-1) = \mu_2^{-1} w_0(y) = \mu_1 w_0(y) & \mbox{for } & y \in (-\infty, \lfloor d_- \rfloor_\e].
\end{aligned}
\end{equation} 
We extend $w_0$ into the cells as follows:
\begin{equation*}
\begin{aligned}
&	w_0(y) = \mu_1^{-1} w_0(y+1) & \mbox{for } & y \in I_{\rm r} : = ( \tfrac{d_+}{\ep}, \lceil d_+ \rceil_\e\big),\\
& w_0(y) = \mu_1^{-1} w_0(y-1) & \mbox{for } & y \in I_{\rm l} := (\lfloor d_- \rfloor_\e, \tfrac{d_-}{\ep}).
\end{aligned}
\end{equation*} 
We  choose  $\varkappa_1$ and $\varkappa_2$ so that the constructed $w_0$ matches the value of $u_0$ at the ends of $D$: 
\[
w_0( d_+/\e) = u_0(d_+), \quad w_0( d_- /\e) = u_0(d_-).
\]
Note that the normalisation factor for $\varkappa_1,$ $\varkappa_2$ depends on 
 $\e$ in general, but it is nevertheless bounded uniformly in $\e$.

The second equation and third equations of (\ref{uoe1}) determine $w_2$ in the stiff component up to an arbitrary additive constant in each interval $Y_1+z$, $z\in \mathcal I_\e,$ and in the stiff intervals  $( I_{\rm l} \cup I_{\rm r} ) \cap \ep^{-1}\Omega^\ep_1$
 adjacent to  $D$. In the intervals $Y_1+z$, $z\in \mathcal I_\e,$ we choose this constant so that 
\begin{equation}\label{4.27}
\begin{aligned}
&w_2(h+z)=0 &\mbox{ if } \e(h+z) \geq d_+, 
\\
&w_2(1+z)=0 &\mbox{ if } \e(h+z)\leq d_-. 
\end{aligned}
\end{equation}
In the intervals $( I_{\rm l} \cup I_{\rm r} ) \cap \ep^{-1}\Omega^\ep_1$ we choose the value of the constant so that 
\begin{equation}\label{4.28}
w_2(d_-/\e)=w_2(d_+/\e)=0.
\end{equation}
In the soft component $Y_0+z$, $z\in \mathcal I_\e$ we do not require $w_2$ to satisfy any equation. Instead we make a specific choice of $w_2$ as follows. Let $f\in C^\infty_0(Y_0)$ be a positive function, then we define 
\[
w_2(z+y) := w_2(z) + c_z \int_0^y \frac{f}{a_0},\qquad y\in Y_0, \ z \in \mathcal I_\e,
\]
where the coefficients $c_z$ are chosen so that $w_2$ is continuous on $\R\setminus D$. Thus, we have 
\[
(a_0 w_2')(z^+) = (a_0 w_2')((z+h)^-) = 0,  \quad z\in \mathcal I_\e.
\]
Finally,  conditions  (\ref{4.27}), (\ref{4.28}) imply that we can extend $w_2$ by zero into the soft intervals in the cells adjacent to $D:$
\begin{equation*}
w_2 \equiv 0 \,\, \mbox{ in } \left[ \bigl(\lfloor d_- \rfloor_\e , \lfloor d_- \rfloor_\e+h\bigr)\cup\bigl( \lfloor d_+ \rfloor_\e ,  \lfloor d_+ \rfloor_\e+ h\big) \right]\setminus \e^{-1}D.
\end{equation*}

It remains to define $u_1$ on $D$ so that it vanishes on the boundary of $D$ and so that its co-derivative matches the co-derivative of $w_0(x/\e) + \e^2 w_2(x/\e).$ 
We require 
\begin{equation*}
	\begin{aligned}
(a_D u_1')\big((d_+)^-\big) = J_1 : = \left\{
	\begin{aligned}
	& (a_0 w_0')\big((d_+/\e)^+\big), & \mbox{ if }d_+/\e \in\big[\lfloor d_+ \rfloor_\e , \lfloor d_+ \rfloor_\e+h\big),
	\\
	& (a_1 w_2')\big((d_+/\e)^+\big), & \mbox{ if } d_+/\e \in \big[\lfloor d_+ \rfloor_\e +h, \lceil d_+ \rceil_\e\big),
	\end{aligned}
	\right. 
	\\ 
	(a_D u_1')\big((d_-)^+\big) = J_2 : = \left\{
	\begin{aligned}
	& (a_0 w_0')\big((d_-/\e)^-\big), & \mbox{ if }d_-/\e \in\big[\lfloor d_- \rfloor_\e , \lfloor d_- \rfloor_\e+h\big),
	\\
	& (a_1 w_2')\big((d_-/\e)^-\big), & \mbox{ if } d_-/\e \in \big[\lfloor d_- \rfloor_\e +h, \lceil d_- \rceil_\e\big).
		\end{aligned}
			\right.
		\end{aligned}
\end{equation*}
In order to fulfil the above conditions we take a smooth cut-off function $\chi$ such that $\chi(x)=0, \, x\leq d_-+\delta$, $\chi(x)=1, \, x\geq d_+-\delta$, for a sufficiently small $\delta>0$, and define
\begin{equation*}
u_1(x):= J_1\chi(x) \int_{d_+}^x a_D^{-1} + J_2(1-\chi(x))\int_{d_-}^x a_D^{-1}, \qquad x \in \RR.
\end{equation*}

Suppose now that $\lambda_0\in\sigma\bigl(A_{{\rm N}, D}\bigr)\backslash\bigl(\bigcup_\theta\sigma(A_\theta)\bigr).$ The construction described above guarantees that the function
\begin{equation}\label{4.33}
u_{\e, \rm ap} (x): = \left\{ 
\begin{aligned}
& u_0(x) +\e u_1(x), & x\in D,
\\
& w_0(x/\e) + \e^2 w_2(x/\e), & x\in \R\setminus D,
\end{aligned}
\right.
\end{equation}
is continuous and has a continuous co-derivative $a^\e_D u_{\e, \rm ap}'$, implying that $u_{\e, \rm ap}$ belongs to the domain of the operator $A_D^\e$. Moreover, it is not difficult to see that 
\begin{equation}\label{4.29}
\begin{aligned}
\|w_0(\cdot/\e)\|_{L^2_{\rho^\e_D}(\R\setminus D)} &\leq  \e^{1/2} || w_0 ||_{L^2_\rho(\RR \backslash \ep^{-1} D)},
\\
\|(a_0 w_2')'(\cdot/\e)\|_{L^2_{\rho^\e_D}(\R\setminus D)} + \|w_2(\cdot/\e)\|_{L^2_{\rho^\e_D}(\R\setminus D)} &\leq C \|w_0(\cdot/\e)\|_{L^2_{\rho^\e_D}(\R\setminus D)},
\\
\|(a_D u_1')'\|_{L^2_{\rho^\e_D}(D)} + \|u_1\|_{L^2_{\rho^\e_D}(D)}& \leq C,
\end{aligned}
\end{equation}
for some constant $C>0$.

It follows from the spectral theorem for self-adjoint operators (see {\it e.g.} \cite{Birman_Solomjak}) that for all functions $f \in {\rm dom}\bigl(A_D^\varepsilon\bigr)\subset L^2_{\rho^\e_D}(\R)$ such that $\Vert f\Vert_{L^2_{\rho^\e_D}(\mathbb R)}=1,$ one has 
\begin{equation*}
{\rm dist}{\big( \lambda_0 , \sigma\left(A^\e_D\right)\big)} \le {\bigl\Vert (A_D^\e - \lambda_0)f\bigr\Vert_{L^2_{\rho^\e_D}(\mathbb R)}}.
\end{equation*}
Straightforward calculations show that 
\begin{equation*}
(A_D^\e - \lambda_0) u_{\e, \rm ap} = \left\{
\begin{aligned}
& - \e (a_D u_1')'(x) - \e \l_0\rho_D u_1(x), & x &\in D,
\\
& -\e^2 (a_0 w_2')'(x/\e) - \e^2 \l_0 \rho_0 w_2(x/\e), & x &\in \Omega_0^\e\setminus D,
\\
& - \e^2 \l_0 \rho_1 w_2(x/\e), & x &\in \Omega_1^\e\setminus D.
\end{aligned}
\right.
\end{equation*} 
Then (\ref{4.29}) readily implies there exists $C>0$ such that
\begin{equation}\label{4.30}
\bigl\|(A_D^\e - \lambda_0) u_{\e, \rm ap}\bigr\|_{L^2_{\rho^\e_D}(\R)} \leq C \e.
\end{equation}

We establish the following result, which implies Claim 1 of Theorem \ref{mainthm}. In particular, the second estimate in (\ref{2.6}) follows from (\ref{4.33}), (\ref{4.29}) and (\ref{eig_conv_est}) below.
\begin{theorem}
\label{lem:locmod}
Suppose  that $\lambda_0\in\sigma\bigl(A_{{\rm N}, D}\bigr)\backslash\bigl(\bigcup_\theta\sigma(A_\theta)\bigr).$ 
\begin{enumerate}[1.]
	\item{ There exists $C_1>0$, independent of $\e$,  such that 
		$$
		{\rm dist}\big( \lambda_0, \sigma(A^\e_D) \big) \le C_1 \e.
		$$}
	\item{ For sufficiently small $\ep$ there exist (simple) eigenvalues $\lambda_\e$ of $A^\e_D$ 
	such that $|\l_\e - \l_0|\leq C_1 \e$. 
	}
	\item{	For sufficiently small $\ep$ the function $u_{\e, \rm ap}$ is an approximate eigenfunction of $A^\e_D:$ there exists a constant $C_2>0$ independent of $\e$ such that	
		\begin{equation}
		\bigl\Vert u_{\e, \rm ap} - u^\e\bigr\Vert_{L^2_{\rho_D^\varepsilon}(\RR)} \le C_2 \e,
		\label{eig_conv_est}
		\end{equation}
	where $u^\e$ is the  
	eigenfunction of $A^\e_D$ corresponding to the eigenvalue $\lambda^\e.$ 

		}
\end{enumerate}
\end{theorem}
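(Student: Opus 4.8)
The plan is to bootstrap from the already-established quasimode estimate \eqref{4.30}. Claim 1 is immediate: the spectral theorem inequality
\[
{\rm dist}\bigl(\lambda_0,\sigma(A^\e_D)\bigr)\le \frac{\bigl\|(A_D^\e-\lambda_0)u_{\e,\rm ap}\bigr\|_{L^2_{\rho^\e_D}(\R)}}{\|u_{\e,\rm ap}\|_{L^2_{\rho^\e_D}(\R)}}
\]
combined with \eqref{4.30} and the uniform lower bound $\|u_{\e,\rm ap}\|_{L^2_{\rho^\e_D}(\R)}\ge\|u_0\|_{L^2_{\rho_D}(D)}-C\e=1-C\e\ge 1/2$ (valid for small $\e$, using \eqref{4.33} and \eqref{4.29}) gives the bound $C_1\e$ with $C_1$ independent of $\e$. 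First I would record this observation and fix the constant $C_1$.

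For Claim 2 I would upgrade the distance estimate to the existence of a genuine \emph{eigenvalue} near $\lambda_0$, and this is where the hypothesis $\lambda_0\notin\bigcup_\theta\sigma(A_\theta)$ does its work: since $\lim_{\e\to0}\sigma(A^\e)=\bigcup_\theta\sigma(A_\theta)$ in the Hausdorff sense (by \eqref{limspectrum}, {\it i.e.} Theorem \ref{estlemma} and its consequences) and $\lambda_0$ lies in a gap of the limit set, there is a fixed $\delta>0$ with ${\rm dist}(\lambda_0,\sigma_{\rm ess}(A^\e_D))={\rm dist}(\lambda_0,\sigma_{\rm ess}(A^\e))\ge 2\delta$ for all small $\e$. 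Hence the part of $\sigma(A^\e_D)$ lying within $\delta$ of $\lambda_0$ consists only of isolated eigenvalues of finite multiplicity, and by Claim 1 this set is nonempty for small $\e$ (it contains a point within $C_1\e<\delta$). Choose $\lambda_\e$ to be the eigenvalue of $A^\e_D$ closest to $\lambda_0$; then $|\lambda_\e-\lambda_0|\le C_1\e$. Simplicity of $\lambda_\e$ for small $\e$ follows from the one-dimensional ODE nature of the problem (as noted in the footnote after Theorem \ref{estlemma}, all eigenvalues of the corresponding 1D operators are simple), or alternatively from an upper semicontinuity-of-multiplicity argument against the simplicity of $\lambda_0$ as an eigenvalue of $A_{{\rm N},D}$.

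For Claim 3 I would use the standard spectral-projection argument. Let $P_\e$ be the spectral projection of $A^\e_D$ onto the interval $(\lambda_0-\delta,\lambda_0+\delta)$; by the previous step, for small $\e$ this interval meets $\sigma(A^\e_D)$ only in the single simple eigenvalue $\lambda_\e$, so $P_\e$ is the rank-one projection onto ${\rm span}\{u^\e\}$. Writing $u_{\e,\rm ap}=P_\e u_{\e,\rm ap}+(I-P_\e)u_{\e,\rm ap}$, the functional calculus gives
\[
\bigl\|(I-P_\e)u_{\e,\rm ap}\bigr\|_{L^2_{\rho^\e_D}(\R)}\le \frac{1}{\delta}\bigl\|(A_D^\e-\lambda_0)u_{\e,\rm ap}\bigr\|_{L^2_{\rho^\e_D}(\R)}\le \frac{C}{\delta}\,\e,
\]
using \eqref{4.30}. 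Hence $u_{\e,\rm ap}$ is within $O(\e)$ of the eigenspace; normalising $u^\e$ by $\|u^\e\|_{L^2_{\rho^\e_D}(\R)}=\|u_{\e,\rm ap}\|_{L^2_{\rho^\e_D}(\R)}$ and choosing its phase so that $P_\e u_{\e,\rm ap}$ is a nonnegative multiple of $u^\e$, the triangle inequality yields \eqref{eig_conv_est} with $C_2=C/\delta$. The main obstacle—and the only place genuine care is needed—is the spectral separation step in Claim 2: one must be sure that the fixed gap width $\delta$ around $\lambda_0$ is uniform in $\e$ and that no essential spectrum creeps in, which is precisely guaranteed by the Hausdorff convergence \eqref{limspectrum} together with the invariance of the essential spectrum under the compactly supported defect perturbation; once that is in place, everything else is the routine quasimode-to-eigenmode machinery.
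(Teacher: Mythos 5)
Your Claims 1 and 2 follow the paper's own route (the quasimode bound \eqref{4.30} plus the normalisation $\|u_{\e,\rm ap}\|_{L^2_{\rho^\e_D}(\R)}\to 1$ for Claim 1; coincidence of essential spectra and Hausdorff convergence \eqref{limspectrum} for Claim 2), and those parts are fine. The genuine gap is in Claim 3, at the sentence asserting that the interval $(\lambda_0-\delta,\lambda_0+\delta)$ meets $\sigma(A^\e_D)$ ``only in the single simple eigenvalue $\lambda_\e$'', so that $P_\e$ is rank one. Nothing you have proved up to that point gives this: Claim 2 only yields the existence of \emph{at least one} eigenvalue within $C_1\e$ of $\lambda_0$, and simplicity of each individual eigenvalue (the one-dimensional ODE argument) does not exclude several distinct simple eigenvalues of $A^\e_D$ lying in the gap near $\lambda_0$, possibly several branches converging to $\lambda_0$ as $\e\to0$. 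In that case the functional-calculus estimate only places $u_{\e,\rm ap}$ within $O(\e)$ of the \emph{span} of the corresponding eigenfunctions, which is exactly the intermediate statement the paper records ($\|u_{\e,\rm ap}-\sum_{j\in J_\e}c_j^\e u_{\e,j}\|\le C_2\e$ with $J_\e$ a finite index set), and one cannot conclude \eqref{eig_conv_est} from it.

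The paper closes this gap by invoking the compactness result of Theorem \ref{th5.1} (proved in Section \ref{sec:uppersemicont} via the localisation Lemma \ref{lem:eigmoderep}): any sequence of $L^2$-normalised eigenfunctions with eigenvalues converging to a point of the gap converges, up to a subsequence, strongly in $L^2(\R)$ to an eigenfunction of $A_{{\rm N},D}$ supported on $D$. Since $\lambda_0$ is a simple eigenvalue of $A_{{\rm N},D}$, two asymptotically distinct (hence orthonormal) eigenfunction branches would both have to converge into the same one-dimensional eigenspace, which is impossible; hence exactly one eigenvalue branch converges to $\lambda_0$, the set $J_\e$ is eventually a singleton, and your projection argument then goes through. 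So the missing ingredient is not ``routine quasimode-to-eigenmode machinery'' but the localisation/compactness theorem of Section \ref{sec:uppersemicont}; without it (or an equivalent uniqueness argument) the rank-one claim, and therefore Claim 3, is unproved.
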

\begin{proof}
Claim 1 of the theorem follows from (\ref{4.30}) and the fact that $\|u_{\e, \rm ap}\|_{L^2_{\rho^\e_D}(\R)}\to\|u_{0}\|_{L^2_{\rho_D}(D)} =1$ as $\e\to 0,$ due to (\ref{4.29}).  Claim 2 follows by noting that the essential spectra of $A^\e_D$ and $A^\e$ coincide, and that $\sigma(A^\e)= \sigma_{\rm ess}(A^\e)$ converges to ${ \bigcup_\theta \sigma(A_\theta) }$, as $\e \rightarrow 0$, to which $\lambda_0$ does not belong. To prove claim 3, one can argue as in \cite{VL}, or \cite[Section 11.1]{JKO}. Namely, it follows from  (\ref{4.30}) and a spectral decomposition of $u_{\e, \rm ap}$ with respect to the operator $A^\e_D$ that there exists an $\varepsilon$-independent constant $C_2>0$ 
and $c_j^\e\in{\mathbb R}$ such that
$$
\Bigl\Vert u_{\e, \rm ap} - \sum_{j \in J_\e} c_j^\e u_{\e,j}\Bigr\Vert_{L^2_{\rho_D^\varepsilon}(\RR)} \le C_2 \e,
$$
where for each $\ep$,  $J_\e : = \left\{ j : \left\vert \lambda_{\e,j} - \lambda_0 \right\vert \le C_2 \e \right\}$ is a finite set of indices and $u_{\e,j}$ are $L_{\rho^\e_D}^2(\R)$-normalised eigenfunctions of $A^\e_D$ with eigenvalue $\lambda_{\e,j}$. Next, the compactness property demonstrated in Theorem \ref{th5.1}, see next section, implies that there is exactly one sequence of simple eigenvalues $\lambda_\varepsilon$ converging to $\lambda_0,$ hence (\ref{eig_conv_est}) holds.
\end{proof}

%

\section{Spectral completeness of defect eigenvalues and localisation of eigenmodes}
\label{sec:uppersemicont}
The method of asymptotic expansions allows us to show that for any eigenvalue $\l_0$ of $A_{{\rm N}, D}$, cf. (\ref{A_N,D}),  in a gap of $\bigcup_\theta \sigma(A_\theta)$ there exists a sequence of eigenvalues of $A^\e_D$ converging to $\lambda_0$. The converse statement requires a compactness argument for a corresponding sequence of eigenfunctions of $A^\e_D$. In this section we use functional analytic techniques, which, unlike Section \ref{sec:decay}, do not rely on the one-dimensional nature of the problem, to show a decay of the eigenfunctions of $A^\e_D$ outside the defect sufficient to imply the compactness of sequences of eigenfunctions with eigenvalues accumulating in the gaps of $\bigcup_\theta \sigma(A_\theta)$. 

\begin{theorem}\label{th5.1}
	Let $\lambda_\ep$ be an eigenvalue sequence of $A^\e_D$, $u_\e$ be a corresponding sequence of $L^2(\mathbb{R})$-normalised eigenfunctions, and suppose that $\lambda_\e \rightarrow \lambda_0 \in \mathbb{R} \backslash {\bigcup_\theta \sigma(A_\theta)}$ as $\e \rightarrow 0$. Then $\l_0$ is an eigenvalue of $A_{{\rm N}, D}$ and up to a subsequence
	\begin{equation*}
	\begin{aligned}
	u_\e \to u_0\mbox{ strongly in } L^2(\R), & \qquad & u_\e \rightharpoonup u_0 \mbox{ weakly in } H^1(D),
	\end{aligned}
	\end{equation*}
	where $u_0$ is an eigenfunction corresponding to $\l_0$, extended by zero outside the defect $D$.
\end{theorem}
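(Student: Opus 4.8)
The plan is to prove Theorem~\ref{th5.1} in three stages: first establish that the eigenfunctions $u_\e$ decay away from the defect with a rate that is uniform in $\e$ (indeed improving as $\e\to0$), then extract a convergent subsequence using this decay to recover compactness in $L^2(\R)$, and finally pass to the limit in the weak formulation to identify $u_0$ as a Neumann eigenfunction of $A_{{\rm N},D}$ with eigenvalue $\lambda_0$.

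\medskip

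\noindent\textbf{Step 1: uniform exponential decay estimates.} Fix $\delta>0$ so that the ball $B_\delta:=\{x:\dist(x,\lambda_0)<\delta\}$ (abuse of notation: a neighbourhood of $\lambda_0$ in $\R$) lies in a gap of $\bigcup_\theta\sigma(A_\theta)$. Since $\lambda_\e\to\lambda_0$ and (by Theorem~\ref{estlemma} and the discussion following it) $\sigma(A^\e)=\sigma_{\rm ess}(A^\e_D)$ converges to $\bigcup_\theta\sigma(A_\theta)$, for small $\e$ the eigenvalue $\lambda_\e$ sits at a positive distance from $\sigma_{\rm ess}(A^\e_D)$. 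I would test the eigenvalue equation $\beta^\e_D(u_\e,v)=\lambda_\e(u_\e,v)_{\rho^\e_D}$ against $v=\varphi^2 u_\e$ for a Lipschitz weight $\varphi$ growing away from $D$, in the spirit of the Combes--Thomas / Agmon method adapted to high contrast (as in \cite{Cherdantsev1,Cherdantsev2}). The key quantitative input is that on the stiff component $\Omega^\e_1\setminus D$ one has a coercivity gain and on the soft component $\Omega^\e_0\setminus D$ the coefficient $\e^2 a_0$ forces the contribution of $|u_\e'|^2$ there to be controlled by $\e^{-2}$ times the $L^2$ mass, exactly the mechanism producing an $\e^{-1}$-scale decay exponent. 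After absorbing cross terms one obtains, for every $\nu$ strictly below a threshold governed by the spectral gap width, an estimate of the form
\begin{equation*}
\int_{\R\setminus D}\exp\!\Bigl(\tfrac{2\nu}{\e}\dist(x,D)\Bigr)\,\rho^\e_D|u_\e|^2\,dx \;\le\; C\int_D \rho^\e_D |u_\e|^2\,dx \;\le\; C,
\end{equation*}
with $C$ independent of $\e$; in particular $\|u_\e\|_{L^2(\R\setminus D)}\le C\e^{1/2}$, which is the third line of \eqref{2.6} and the required tightness.

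\medskip

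\noindent\textbf{Step 2: compactness.} Using the eigenvalue equation again with $v=u_\e$ gives $\beta^\e_D(u_\e,u_\e)=\lambda_\e\|u_\e\|^2_{\rho^\e_D}\le C$, so $\int_D a_D|u_\e'|^2\le C$ and, $a_D$ being uniformly positive, $\{u_\e\}$ is bounded in $H^1(D)$; extract a subsequence with $u_\e\rightharpoonup u_0$ weakly in $H^1(D)$ and strongly in $L^2(D)$ by Rellich. On $\R\setminus D$ the decay estimate of Step~1 makes the sequence tight, and combined with $\|u_\e\|_{L^2(\R\setminus D)}\to0$ one concludes $u_\e\to u_0$ strongly in $L^2(\R)$ where $u_0$ is extended by zero. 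Normalisation $\|u_\e\|_{L^2(\R)}=1$ then forces $\|u_0\|_{L^2(D)}=1$, so $u_0\neq0$.

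\medskip

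\noindent\textbf{Step 3: identification of the limit.} For a fixed test function $v\in C_0^\infty(D)$, use $v$ (extended by zero) in the weak formulation for $A^\e_D$: since $\mathrm{supp}\,v\subset D$ where $a^\e_D=a_D$ and $\rho^\e_D=\rho_D$ are fixed, $\int_D a_D u_\e'\overline{v'}=\lambda_\e\int_D\rho_D u_\e\overline{v}$, and passing to the limit using $u_\e\rightharpoonup u_0$ in $H^1(D)$ and $\lambda_\e\to\lambda_0$ yields $\int_D a_D u_0'\overline{v'}=\lambda_0\int_D\rho_D u_0\overline{v}$. To upgrade this to all $v\in H^1(D)$ — i.e. to capture the Neumann condition — I would instead test the $A^\e_D$-equation with an arbitrary $v\in H^1(D)$ extended off $D$ in a controlled way (e.g. by the constant value at $d_\pm$, or by a fixed $H^1$ extension supported in a bounded neighbourhood), split $\beta^\e_D$ into the $D$-part and the $(\R\setminus D)$-part, and show the latter is $o(1)$: on $\Omega^\e_1\setminus D$ the integrand $a_1 u_\e'\overline{v'}$ is controlled via $\int a_1|u_\e'|^2$, which the decay estimate shows is small, while on $\Omega^\e_0\setminus D$ the factor $\e^2 a_0$ kills the term. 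Hence $\int_D a_D u_0'\overline{v'}=\lambda_0\int_D\rho_D u_0\overline{v}$ for all $v\in H^1(D)$, i.e. $(\lambda_0,u_0)$ is an eigenpair of $A_{{\rm N},D}$.

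\medskip

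\noindent\textbf{Main obstacle.} The crux is Step~1: getting the weighted energy estimate with a weight growing like $\exp(\nu\dist(\cdot,D)/\e)$ rather than merely $\exp(\nu\dist(\cdot,D))$. One must choose $\varphi$ to be $\e$-dependent (essentially piecewise exponential with rate $\nu/\e$, constant within each period on the stiff pieces to mimic the discrete transfer-matrix decay $|\mu_1|<1$), and then carefully track that the commutator/cross terms $\varphi'u_\e$ generate factors of $\nu/\e$ that are exactly balanced — but not over-balanced — by the $\e^{-2}$ weights in $\beta^\e_D$ on the soft layers and by the spectral gap distance on the stiff layers. Controlling the boundary/interface contributions at $\partial D$ and at the cell interfaces (where the co-derivative is continuous but $u_\e'$ jumps) uniformly in $\e$ is the delicate bookkeeping. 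The functional-analytic Combes--Thomas route keeps this from depending on the one-dimensional structure, as promised in the text, but the price is the explicit $\e$-scaling of the weight.
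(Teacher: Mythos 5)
Your Steps 2 and 3 (Rellich on $D$, strong $L^2$ convergence outside $D$, passage to the limit in the weak formulation with the soft-component term killed by the $\e^2$ weight) are sound and essentially coincide with the end of the paper's proof. The problem is Step 1, which carries the whole argument and is not actually proved: you defer exactly the hard point ("the main obstacle") and the mechanism you sketch for it does not work as described. Since $\lambda_0$ lies in an \emph{interior} gap of the essential spectrum (note $0\in\bigcup_\theta\sigma(A_\theta)$, so the relevant gaps are not below the bottom of the spectrum), the quadratic form $\beta^\e_D-\lambda_\e$ is \emph{not} coercive on functions supported in $\R\setminus D$; the distance of $\lambda_\e$ to $\sigma_{\rm ess}(A^\e_D)$ is a resolvent-level statement and cannot be invoked as pointwise "coercivity on the stiff layers" to absorb the cross terms produced by testing with $\varphi^2u_\e$ for an $\e$-dependent Agmon weight. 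Turning that into a rigorous Combes--Thomas argument with decay rate $\nu/\e$ is precisely the new content of the paper (its Theorem 2.4, Claim 3), which the authors obtain in Section 6 by a one-dimensional transfer-matrix analysis, not by a weighted form inequality. In addition, even granting your weighted bound, the conclusion $\|u_\e\|_{L^2(\R\setminus D)}\le C\e^{1/2}$ does not follow from it: the weight $\exp(2\nu\dist(x,D)/\e)$ is of order one within distance $O(\e)$ of $\partial D$, so the bound only controls the mass at distance $\gtrsim\e$ from the defect and says nothing small about the adjacent layer; the $\e^{1/2}$ estimate in the paper comes from the periodic cell structure of the approximate eigenfunction, not from an Agmon bound alone. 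Similarly, your Step 3 needs either derivative smallness on $\Omega^\e_1\setminus D$ (which your Step 1 does not provide) or a careful choice of extension of the test function, so as stated it leans again on the unproved estimate.

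For comparison, the paper proves Theorem 5.1 without any strong decay input: it decomposes $u_\e=v_\e+w_\e$ (Lemma 5.2), where $v_\e$ is piecewise constant on the stiff component outside $D$ and $w_\e$ satisfies $\|w_\e\|_{L^2(\R)}\le C\e^2$, $\|w_\e'\|_{L^2(\R)}\le C\e$ by elementary Poincar\'e/maximum-principle estimates, and then shows $\|\chi_{\e,\alpha}v_\e\|_{L^2(\R)}\to0$ by a Floquet--Bloch/Weyl-criterion argument in the dual space $H^-$: the rescaled, cut-off function $z_\e$ satisfies $\bigl|\int_{\Omega_0}a_0z_\e'\varphi'-\lambda_\e\int_\R\rho z_\e\varphi\bigr|=o(1)\|\varphi\|_{H^1}$, and since $|\lambda_n(\theta)-\lambda_\e|>c$ uniformly in $n,\theta$ for small $\e$, the spectral decomposition of the limit operator forces $\|z_\e\|_{L^2}\to0$. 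That weaker, purely functional-analytic localisation is all that the compactness and limit identification require; your plan instead makes Theorem 5.1 contingent on the much stronger accelerated-decay theorem, and that theorem is not established by your sketch. As it stands, the proposal has a genuine gap at its core step.
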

\begin{proof}
The main ingredient of the proof is demonstrating that  the eigenfunction sequences $u_\e$ localise to the defect in the  sense that
\begin{equation}
\label{upperclaim1}\lim_{\e \rightarrow 0}|| \chi_{\e,\alpha} u_\e ||_{L^2(\mathbb{R})} = 0 \quad \forall \, \alpha \in (0,1),
\end{equation}
where  $\chi_{\ep,\alpha}: C^\infty(\mathbb{R}) \rightarrow [0,1],$ $\varepsilon>0$, is any smooth cut-off function such that 
$$
\chi_{\e,\alpha} = \left\{ \begin{array}{ll}
0 & {\rm in}\ \ D, \\[0.4em]
1 & {\rm in}\ \ (-\infty, d_--\ep^{\alpha}]\cup [d_+ + \ep^\alpha,\infty).
\end{array} \right.
$$
Additionally, $\chi_{\e,\alpha}$ is constant on each connected component of $\Omega^1_\e$ and satisfies the bound $\sup\limits_\e \e^\alpha \Vert \chi_{\e,\alpha}' \Vert_{L^\infty(\mathbb R)} < \infty$.
The assertion \eqref{upperclaim1} is an immediate consequence of the following lemma, that we demonstrate below.
\begin{lemma}
	\label{lem:eigmoderep} Consider a sequence $\lambda_\e \in [0,\infty)$, $u_\e \in L^2(\mathbb{R})$, $|| u_\e ||_{L^2(\mathbb{R})} =1$, such that  $A^\e_D u_\e = \lambda_\e \rho^\e_Du_\e$. If the convergence $\lambda_\ep\to\lambda_0 \in \mathbb{R} \backslash {\bigcup_\theta \sigma(A_\theta)}$ holds as $\varepsilon\to0,$ then there exist sequences $v_\e$, $w_\e \in H^1(\mathbb{R})$ such that $u_\e = v_\e + w_\e$ 
with the following properties:

1) One has 
$v'_\e \equiv  0$ on $\Omega^\e_1 \backslash D;$

2) The sequence $v_\e$ is localised to defect in the sense of \eqref{upperclaim1};

3) There exists a constant $C>0$ such that
\begin{equation}
	\label{2.24}
	\|w_\e\|_{L^2(\R)}\leq C \e^2,\qquad\|w_\e'\|_{L^2(\R)}\leq C \e.
\end{equation}

\end{lemma}
Let us prove that $\lambda_0 \in \sigma(A_{{\rm N},D})$ under the assumption that Lemma \ref{lem:eigmoderep} holds. By substituting $\varphi = u_\e$ in the eigenvalue problem for the operator $A^\e_D$ ({\it cf.}  \eqref{weakdefectform})
\begin{equation*}
\int_D a^\e_D u_\e' \overline{\varphi'}  =  \l_\e \int_\R \rho^\e_D u_\e\overline{\varphi} \qquad \forall \varphi \in H^1(\R),
\end{equation*}
 and utilising the boundedness of $\lambda_\e$, the uniform positivity and boundedness of $a_j,$ $\rho_j,$ $j=1,2,$ $a_D$ and $\rho_D,$ we establish the estimates
\begin{equation}\label{1.5}
\begin{aligned}
\sup_\e || u_\e ||_{H^1(D)} < \infty,& \qquad & \sup_\e\|u_\e'\|_{L^2(\Omega_1^\e\backslash D)} < \infty, & \qquad & \sup_\e\|\e u_\e'\|_{L^2(\Omega_0^\e\backslash D)} < \infty.
\end{aligned}
\end{equation}
By \eqref{1.5}, it is clear that a subsequence of $u_\varepsilon$ converges weakly in $H^1(D)$.
Now, by Lemma \ref{lem:eigmoderep} and the identity $u_\e = \chi_{\e,\alpha} v_\e + (1- \chi_{\e,\alpha}) v_\e + w_\e$ we find that $u_\e$ strongly converges to zero in $L^2(\R \backslash D)$. Therefore, there exists  $u_0 \in L^2(\mathbb{R})$, $u_0 \equiv 0$ in $\mathbb{R} \backslash D$, such that up to a subsequence
\begin{equation*}
\begin{aligned}
u_\e \to u_0\mbox{ strongly in } L^2(\R), & \qquad & u_\e \rightharpoonup u_0 \mbox{ weakly in } H^1(D).
\end{aligned}
\end{equation*}
Moreover, Lemma \ref{lem:eigmoderep} implies that
\begin{equation*}
a_1(\tfrac{\cdot}{\ep})u_\e' \to 0 \mbox{ strongly in } L^2(\Omega^\e_1 \backslash D).
\end{equation*}
Therefore, for fixed $\varphi \in H^1(\mathbb{R})$, we can pass to the limit in \eqref{weakdefectform}, recalling the identity
\begin{equation}
\int_{\mathbb R} a^\e_D u_\e' \overline{\varphi'} = \int_D a_D u_\e' \overline{\varphi'} + \int_{\Omega^\e_1 \backslash D} a_1 (\tfrac{x}{\e}) u_\e' \overline{\varphi'}  + \int_{\Omega^\e_0 \backslash D} \ep^2a_0(\tfrac{x}{\e}) u_\e' \overline{\varphi'}
\end{equation}
to find that
\begin{equation}\label{4.38}
\int_{D} a_D u_0'\overline{\varphi'}= \lambda_0  \int_{D} \rho_D  u_0 \overline{\varphi}.
\end{equation}
Finally, by the arbitrariness of $\varphi$ deduce that $\lambda_0 \in \sigma(A_{{\rm N},D})$ and $u_0$ is the corresponding eigenfunction.

\end{proof}

\begin{corollary}
Claim 2 of Theorem \ref{mainthm} holds.
\end{corollary}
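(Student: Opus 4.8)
The plan is to produce $v_\e$ by \emph{flattening} $u_\e$ on every stiff cell–interval, to control the discarded remainder $w_\e=u_\e-v_\e$ using the a priori bounds \eqref{1.5} together with the high‑contrast interface relations, and to extract the localisation from the fact that $\lambda_0$ lies in a gap of $\lim_\e\sigma(A^\e)$. Concretely, I would set $v_\e:=u_\e$ on $D$; on each stiff interval $\e(Y_1+z)\subset\Omega_1^\e\setminus D$ let $v_\e$ be the constant equal to the mean of $u_\e$ over that interval; and on each soft interval $\e(Y_0+z)$ — and on the soft pieces of the finitely many cells whose closure meets $\partial D$ — let $v_\e:=u_\e+\ell$, where $\ell$ is the affine function making $v_\e$ continuous (matching the chosen constants on the neighbouring stiff intervals, respectively the value of $u_\e$ at $\partial D$). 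Then $v_\e\in H^1(\R)$ and $v_\e'\equiv0$ on $\Omega_1^\e\setminus D$, so property~1 holds by construction; $w_\e:=u_\e-v_\e\in H^1(\R)$ vanishes on $D$, equals $u_\e-(\text{its mean})$ on the stiff intervals, and equals $-\ell$ on the soft ones.

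\emph{Localisation (property~2).} Since $w_\e$ will turn out to be $o(1)$ in $L^2$, it suffices to show $\Vert\chi_{\e,\alpha}u_\e\Vert_{L^2(\R)}\to0$, and this is where the gap enters. As $\lambda_\e\to\lambda_0\notin\lim_\e\sigma(A^\e)$, for small $\e$ one has $\dist(\lambda_\e,\sigma(A^\e))\ge c>0$, hence $\Vert(A^\e-\lambda_\e)^{-1}\Vert\le c^{-1}$ on $L^2_{\rho^\e}(\R)$. Writing $\chi=\chi_{\e,\alpha}$, which is supported away from $D$ (so the coefficients there are the unperturbed ones) and, by assumption, constant on the stiff components, the eigenvalue equation for $u_\e$ makes the bulk terms cancel and leaves
\[
\beta^\e(\chi u_\e,\varphi)-\lambda_\e(\chi u_\e,\varphi)_{\rho^\e}=\int_\R a^\e\chi'u_\e\,\overline{\varphi'}-\int_\R a^\e\chi'u_\e'\,\overline{\varphi},\qquad\varphi\in H^1(\R).
\]
The key point is that $\chi'$ is supported in $\Omega_0^\e$ inside the $\e^\alpha$‑layer around $\partial D$, where $a^\e=\e^2a_0(\cdot/\e)$ and $\Vert\chi'\Vert_\infty\le C\e^{-\alpha}$, so $|a^\e\chi'|\le C\e^{2-\alpha}$. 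Taking $\varphi=(A^\e-\lambda_\e)^{-1}(\chi u_\e)$, for which the left-hand side equals $\Vert\chi u_\e\Vert^2_{L^2_{\rho^\e}}$, and using $\Vert\varphi\Vert_{L^2}\le c^{-1}\Vert\chi u_\e\Vert_{L^2}$, the elliptic bound $\Vert\varphi'\Vert_{L^2(\Omega_0^\e)}\le C\e^{-1}\Vert\chi u_\e\Vert_{L^2}$ (from $\beta^\e(\varphi,\varphi)\le C\Vert\chi u_\e\Vert^2_{L^2}$ and $a^\e\ge\e^2\min a_0$ on $\Omega_0^\e$), and the a priori bound $\Vert\e u_\e'\Vert_{L^2(\Omega_0^\e\setminus D)}\le C$ from \eqref{1.5}, I expect to obtain $\Vert\chi u_\e\Vert_{L^2}\le C\e^{1-\alpha}\to0$ for every $\alpha\in(0,1)$. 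Hence $v_\e=u_\e-w_\e$ is localised in the sense \eqref{upperclaim1}.

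\emph{Smallness of $w_\e$ (property~3).} Since $v_\e'\equiv0$ on $\Omega_1^\e\setminus D$ one has $w_\e'=u_\e'$ there, so the claim $\Vert w_\e'\Vert_{L^2(\R)}\le C\e$ already forces $\Vert u_\e'\Vert_{L^2(\Omega_1^\e\setminus D)}\le C\e$, and proving this is the core of the matter. On a stiff interval $\e(Y_1+z)\subset\Omega_1^\e\setminus D$ the flux $p_\e:=a^\e_Du_\e'$ is continuous with $\Vert p_\e'\Vert_{L^2}=\lambda_\e\Vert\rho^\e_Du_\e\Vert_{L^2}\le C\Vert u_\e\Vert_{L^2}$ on it, while at its endpoints $p_\e$ coincides with the flux on the adjacent soft intervals, where $p_\e=\e^2a_0(\cdot/\e)u_\e'$ gives $\Vert p_\e\Vert_{L^2}\le C\e\Vert\e u_\e'\Vert_{L^2}$; a one-dimensional trace estimate on those soft intervals (of length $\sim\e$), together with \eqref{1.5}, makes $p_\e$ of size $C\e^{1/2}\times(\text{square-summable cell-local quantity})$ at the endpoints, hence on the whole stiff interval, whence $\Vert u_\e'\Vert_{L^2(\e(Y_1+z))}\le C\e\,(\cdots)$; squaring and summing over $z$ using \eqref{1.5} and $\Vert u_\e\Vert_{L^2}=1$ yields $\Vert u_\e'\Vert_{L^2(\Omega_1^\e\setminus D)}\le C\e$. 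With this in hand: on the stiff intervals $\Vert w_\e'\Vert_{L^2}=\Vert u_\e'\Vert_{L^2}$ and $\Vert w_\e\Vert_{L^2(\e(Y_1+z))}\le C\e\Vert u_\e'\Vert_{L^2(\e(Y_1+z))}$ by Poincar\'e; on each soft interval $|\ell|\le C\e^{1/2}$ and $|\ell'|\le C\e^{-1/2}$ times $\Vert u_\e'\Vert_{L^2(\text{adjacent stiff})}$; summing gives $\Vert w_\e\Vert_{L^2(\R)}\le C\e^2$ and $\Vert w_\e'\Vert_{L^2(\R)}\le C\e$, i.e.\ \eqref{2.24}.

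\emph{Main obstacle.} The delicate step is the flux bound $\Vert u_\e'\Vert_{L^2(\Omega_1^\e\setminus D)}\le C\e$, and within it the finitely many cells abutting $\partial D$: away from the defect the interface relations force the stiff-part flux to be $O(\e^2)$, but at $\partial D$ the coefficient is of order one, so one must instead exploit that the trace of $p_\e$ at $\partial D$ vanishes in the limit (via the $H^1(D)$-convergence $u_\e\to u_0$ and the Neumann condition $u_0'|_{\partial D}=0$) in order to absorb their contribution. One could alternatively run a one-dimensional transfer-matrix argument — propagating $(u_\e,\e^{-1}a^\e_Du_\e')$ from cell to cell with leading matrix \eqref{u0=2} at $\lambda_\e$, which is hyperbolic because $\lambda_0$ violates \eqref{specineq}, so that $L^2$-normalisation forces geometric decay of the cell data and yields both the localisation and the flux bound simultaneously — but that route is specific to one dimension.
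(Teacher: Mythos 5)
Your overall architecture is the paper's: you are reproving its key Lemma \ref{lem:eigmoderep} (decomposition $u_\e=v_\e+w_\e$ with $v_\e'\equiv0$ on the stiff part, smallness of $w_\e$, localisation) and Theorem \ref{th5.1}. Two steps, however, take genuinely different routes. Your localisation argument — testing the eigenvalue identity with $\chi_{\e,\alpha}\varphi$, then choosing $\varphi=(A^\e-\lambda_\e)^{-1}(\chi_{\e,\alpha}u_\e)$ and using $|a^\e\chi_{\e,\alpha}'|\le C\e^{2-\alpha}$, $\|\varphi'\|_{L^2(\Omega_0^\e)}\le C\e^{-1}\|\chi_{\e,\alpha}u_\e\|$ and \eqref{1.5} — does close, gives $\|\chi_{\e,\alpha}u_\e\|_{L^2(\R)}\le C\e^{1-\alpha}$, and works directly on $u_\e$; the uniform resolvent bound it needs follows from the Hausdorff convergence \eqref{limspectrum}, which the paper also invokes. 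The paper instead shows that $\mathcal R_\e(\chi_{\e,\alpha}v_\e)$ is an approximate null vector of $A^0-\lambda_\e$ and uses the Floquet--Bloch spectral decomposition of the limit operator (Appendix B) together with a Weyl-criterion-for-forms argument; your route trades that machinery for the $\e$-dependent resolvent bound and is arguably simpler. The decomposition itself coincides with the paper's (cell means on the stiff intervals), except that the paper extends $w_\e$ into the soft intervals $a_0$-harmonically and anchors the construction differently near $\partial D$.

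There are, however, genuine gaps. First, your ``core'' bound $\|u_\e'\|_{L^2(\Omega_1^\e\setminus D)}\le C\e$: the paper obtains it (as $\|w_\e'\|$ on the stiff part) in one stroke by taking $\varphi=w_\e$ in \eqref{weakdefectform}, using $w_\e\equiv0$ on $D$ and $v_\e'\equiv0$ on $\Omega_1^\e\setminus D$, and closing with \eqref{1.9}, \eqref{1.19}; no cell-by-cell flux analysis is needed. Your flux-propagation argument is fine away from $D$, but your proposed treatment of the cells abutting $\partial D$ — exploiting that the trace of $a_Du_\e'$ at $\partial D$ vanishes in the limit because $u_0$ satisfies the Neumann condition — is circular: that $u_0$ is a Neumann eigenfunction (i.e.\ $\lambda_0\in\sigma(A_{{\rm N},D})$) is precisely the conclusion of Claim 2, and its proof requires passing to the limit in \eqref{weakdefectform}, which needs the very flux smallness you are establishing. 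The obstacle is in fact spurious: every stiff component of $\R\setminus D$ has, on its side away from $D$, an endpoint shared with a full soft cell of $\Omega_0^\e\setminus D$, so the $O(\e^{1/2})$ trace bound can always be propagated from that side; alternatively use the paper's variational test. Relatedly, your affine extension on a soft piece abutting $\partial D$ of length much smaller than $\e$ can violate $\|w_\e'\|_{L^2(\R)}\le C\e$ (anchor it at $w_\e(d_\pm)=0$, or note that only the stiff-part bound is actually used later). Second, the proposal stops at the lemma: the step that actually yields Claim 2 — Rellich compactness in $H^1(D)$, strong $L^2(\R)$ convergence with $u_0\not\equiv0$ (from $\|u_\e\|_{L^2(\R)}=1$ and the decay outside $D$), and passage to the limit in \eqref{weakdefectform} to reach \eqref{4.38} — is not carried out; it is routine given your estimates, but it is the statement being proved.
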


We now prove Lemma \ref{lem:eigmoderep}.
\begin{proof}[Proof of Lemma \ref{lem:eigmoderep}]

 We start by constructing the representation $u_\e$ as the sum of  $v_\e$ and $w_\e$ 
 as follows. On the defect $D=(d_-,d_+)$, we set $v_\e = u_\e$. On each connected component of $\Omega_1^\e$, except for the intervals adjacent to the defect, we define $v_\e$ as
\begin{equation}
v_\e(x) := 
\frac{1}{\e (1-h)} \int_{\e (Y_1 + z)}u_\e, \qquad x \in \e(Y_1+z),\ \ z \in \mathcal{I}_\e,
\end{equation}
where ${\mathcal I}_\varepsilon$ is defined by (\ref{triangle}).
If necessary, we extend $v_\e$ continuously by constant from $D$ into the stiff region adjacent to the defect, {\it i.e.} $\Omega^\e_1 \cap \bigl(  \big(\min\bigl\{ d_-, \ep(\lfloor d_- \rfloor_\e+h)\bigr\}, d_-\bigr] \cup \bigl[\max\bigl\{\ep(\lfloor d_+ \rfloor_\e+ h), d_+ \bigr\}, \lceil d_+ \rceil_\e \big) \big)$. Thus $v_\e$ is defined everywhere except the soft component $\Omega^\e_0 \backslash D$, and is piecewise constant on the stiff component $\Omega^\e_1 \backslash D$. To define $v_\e$ on  $\Omega^\e_0 \backslash D$ we ensure that the difference $w_\e:= u_\e - v_\e, \,\, x \in  \Omega_1^\e \backslash D,$
is extended into the soft component $\Omega^\e_0 \backslash D$ so that $w_\e \in H^1(\R)$ and  satisfies
\begin{equation}\label{1.6}
\bigl(a_0(\tfrac{\cdot}{\e}) w_\e'\bigr)' = 0, \qquad \mbox{ on } \Omega_0^\e \backslash D.
\end{equation}
Thus we have $u_\e = v_\e + w_\e$, where $v_\e, w_\e \in H^1(\R)$ with $v' \equiv 0$ on $\Omega^\e_1 \backslash D$ and $w_\e \equiv 0$ in $D$. 

We first prove \eqref{2.24}. By construction, for each $z \in \mathcal{I}_\e$, the function $w_\e$ has zero mean value on the interval $ \e(Y_1+z)$ and it is clear, for example by an application of the fundamental theorem of calculus, that for each $z \in \mathcal{I}_\e$ one has
\begin{equation}\label{1.11}
\bigl|w_\e(x)\bigr|^2 \leq  \e(1-h)\int_{\e (Y_1 + z)} |w_\e'|^2, \qquad x \in \e(Y_1+z).
\end{equation}
A version of the same argument implies that since $w_\e \equiv 0$ on $(d_-,d_+)$, on $I_\e : = (\e \lfloor d_- \rfloor_\e, \e \lceil d_+ \rceil_\e) $ we have 
\begin{equation}
\label{1.11extra}
\bigl|w_\e(x)\bigr|^2 \leq  \e \int_{I_\e \backslash D} |w_\e'|^2, \qquad x \in I_\e.
\end{equation}
Moreover, if the soft component touches the defect on the right, {\it i.e.} if $d_+ < \e (\lfloor d_+ \rfloor_\e + h)$ then
\begin{equation}\label{5.42}
  \begin{aligned}
 |w_\e(x)| \leq(\e \lfloor d_+ \rfloor_\e + \e h - d_+)^{-1}\int_{( d_+,  \e \lfloor d_+ \rfloor_\e + \e h)} |w_\e'|^2,  \quad x\in ( d_+,  \e \lfloor d_+ \rfloor_\e + \e h)
 \end{aligned}
\end{equation}
 and  if the soft component touches the defect on the left, {\it i.e.} $d_- \leq \e (\lfloor d_- \rfloor_\e + h)$, then
 
 \begin{equation}\label{5.43}
 |w_\e(x)| \leq(d_- - \e \lfloor d_- \rfloor_\e )^{-1}\int_{(\e \lfloor d_- \rfloor_\e,d_-)} |w_\e'|^2,  \quad x\in ( \e \lfloor d_- \rfloor_\e,d_-).
 \end{equation}
 
On the soft component $\Omega^\e_0 \backslash I_\e = \bigcup_{z \in \mathcal{I}_\e} \e(Y_0+z)$, we note that since $w_\e$ solves \eqref{1.6}, the 
maximum 
principle implies
\begin{equation*}
\sup_{(\e z, \e(z+ h))}|w_\e| = \max\bigl\{\bigl|w_\e(\e z)\bigr|,\, \bigl|w_\e(\e(z + h)) \bigr|\bigr\} \qquad \forall z \in \mathcal{I}_\e.
\end{equation*}
This fact, along with inequalities \eqref{1.11} and  \eqref{1.11extra}, 
implies that
\begin{equation*}
\|w_\e\|_{L^2(\e(Y_0+z))} \leq \e\,  \max\big\{\|w_\e'\|_{L^2(\e(Y_1+z))},\,\|w_\e'\|_{L^2(\e(Y_1+z-1))}\big\}.
\end{equation*}
Putting the above inequalities together, it follows that
\begin{equation}\label{1.9}
\|w_\e\|^2_{L^2(\R)} =  \int_{I_\e } | w_\e |^2 + \sum_{z \in \mathcal{I}_\e } \int_{\e (Y_0 + z)}| w_\e |^2 + \sum_{z \in \mathcal{I}_\e } \int_{\e (Y_1 + z)} | w_\e |^2  \leq  2 \e^2\|w_\e'\|^2_{L^2(\Omega_1^\e\backslash D)}.
\end{equation}

Straightforward calculations show that due to (\ref{1.6}) we have on the soft component
\begin{equation*}
\sup_{\e(Y_0+z)} |w_\e'| \leq (\e h)^{-1}||a_0||_{L^\infty(Y_0)} || a_0^{-1} ||_{L^\infty(Y_0)}\Bigl(\bigl|w_\e(\e z)\bigr|+ \bigl|w_\e(\e z + \e h)\bigr|\Bigr). \\
\end{equation*}
Similarly, if the soft component touches the defect on the right, {\it i.e.} if $d_+ < \e (\lfloor d_+ \rfloor_\e + h)$ then
$$
\begin{aligned}
	\sup_{( d_+,  \e \lfloor d_+ \rfloor_\e + \e h)} |w_\e'| \leq(\e \lfloor d_+ \rfloor_\e + \e h - d_+)^{-1}||a_0||_{L^\infty(Y_0)} || a_0^{-1} ||_{L^\infty(Y_0)} \,  \bigl|w_\e (\e \lfloor d_+ \rfloor_\e + \e h)\bigr|, 
\end{aligned}
$$
and  if the soft component touches the defect on the left, {\it i.e.} $d_- \leq \e (\lfloor d_- \rfloor_\e + h)$, then
$$
\begin{aligned}
\sup_{( \e \lfloor d_- \rfloor_\e,d_-)} |w_\e'| \leq(d_- - \e \lfloor d_- \rfloor_\e )^{-1}||a_0||_{L^\infty(Y_0)} || a_0^{-1} ||_{L^\infty(Y_0)} \,  \bigl|w_\e(\e \lfloor d_- \rfloor_\e)\bigr|.
\end{aligned}
$$
Consequently, from (\ref{1.11})--(\ref{5.43}) and the above assertions, we obtain 
\begin{equation}\label{1.19}
\|w_\e'\|_{L^2(\Omega_0^\e \backslash D)} \leq C \|w_\e'\|_{L^2(\Omega_1^\e \backslash D)}.
\end{equation}

It remains to  bound $w_\e'$ on the stiff component $\Omega^\e_1 \backslash D$, which in combination with (\ref{1.9}) and  (\ref{1.19})  yields the  estimates (\ref{2.24}). To this end, note that by setting $\varphi = w_\e$ in \eqref{weakdefectform}, using the identity $u_\e = v_\e + w_\e$ and the facts that $v_\ep' = 0 $ in $\Omega^\e_1 \backslash D$ and  $w_\e \equiv 0$ in $D$, we have
\begin{equation*}
\int_{\Omega_0^\e \backslash D} \e^2 a_0(\tfrac{\cdot}{\e}) u_\e' w_\e' +\int_{\Omega_1^\e \backslash D} a_1(\tfrac{\cdot}{\e})| w_\e'|^2 dx= \lambda_\e  \int_{\Omega_0^\e \backslash D} \rho_0(\tfrac{\cdot}{\e})   u_\e w_\e + \lambda_\e  \int_{\Omega_1^\e \backslash D} \rho_1(\tfrac{\cdot}{\e})   u_\e w_\e.
\end{equation*}
Hence, by the H\"older inequality we deduce that
$$
|| w_\e' ||_{L^2(\Omega_1^\e \backslash D)}^2 \le \e || \e u_\e' ||_{L^2(\Omega_0^\e \backslash D)}  || w_\e' ||_{L^2(\Omega_0^\e \backslash D)} + C \left( || u_\e ||_{L^2(\R)} || w_\e ||_{L^2(\R)} \right)
$$
for some $C>0,$ and utilising  (\ref{1.5}), \eqref{1.9}, \eqref{1.19}   yields
\begin{equation}\label{w'stiff}
\|w_\e'\|_{L^2(\O_1^\e \backslash D)}\leq C \e.
\end{equation}
Hence, by \eqref{1.9}, \eqref{1.19}, \eqref{w'stiff} and the fact $w \equiv 0$ in $D$, it follows that \eqref{2.24} holds.

We now prove Claim 2. For a fixed $\varphi \in H^1(\R)$ we take a test function $\chi_{\e,\alpha} \varphi$ in \eqref{weakdefectform},  use the identity $u_\e' (\chi_{\e,\alpha}\varphi)' = (u_\e \chi_{\e,\alpha})'\varphi' - u_\e \chi_{\e,\alpha}' \varphi' + u_\e' \chi_{\e,\alpha}'\varphi$ and the decomposition $u_\e = v_\e + w_\e$  to  arrive at the equation
\begin{equation*}
\int_{\mathbb R} \big(a^\e_D (\chi_{\e,\alpha} v_\e)'\varphi' - \lambda_\e \rho^\e_D \chi_{\e,\alpha}v_\e\varphi \big) =   \int_{\mathbb R} \big(\lambda_\e \rho^\e_Dw_\e \chi_{\e,\alpha}\varphi - a^\e_D (w_\e\chi_{\e,\alpha})'\varphi' + a^\e_D \chi_{\e,\alpha}' (u_\e\varphi' - u_\e'\varphi)\big).
\end{equation*}
By inequalities \eqref{2.24}, \eqref{1.5},  the fact that $\chi'_{\e,\alpha} \equiv 0$ on $\Omega^\e_1$, and $\sup\limits_\e \e^\alpha | \chi'_{\e,\alpha} | < \infty$ we can estimate the right-hand side as follows:
\begin{equation*}
\left|\int_{\mathbb R}\big(\lambda_\e \rho^\e_Dw_\e \chi_{\e,\alpha}\varphi - a^\e_D (w_\e\chi_{\e,\alpha})'\varphi' + a^\e_D \chi_{\e,\alpha}' (u_\e\varphi' - u_\e'\varphi)\big)\right|\leq C \e^{1-\alpha} || \varphi ||_{H^1(\R)}.
\end{equation*}
Therefore, one has 
\begin{equation*}
\lim_{\e \rightarrow 0}\sup_{\substack{\varphi \in H^1(\R) \\ || \varphi ||_{H^1(\R)}=1}}\left| \int_{\mathbb R} \big(a^\e_D (\chi_{\e,\alpha}v_\e)'\varphi' - \lambda_\e \rho^\e_D \chi_{\e,\alpha}v_\e\varphi \big) \right\vert= 0,
\end{equation*}
Notice that $(\chi_{\e,\alpha}v_\e)' \equiv 0$ on $\Omega^\e_1$ and $D$,  and therefore
$$
\int_{\mathbb R} \big(a^\e_D (\chi_{\e,\alpha}v_\e)'\varphi' - \lambda_\e \rho^\e_D \chi_{\e,\alpha}v_\e\varphi \big)  = \sum_{z \in \mathbb{Z}}\int_{z}^{z+h} a_0\bigl(\mathcal{R}_\e(\chi_{\e,\alpha}v_\e)\bigr)'\bigl(\mathcal{R}_\e(\varphi)\bigr)'- \lambda_\e \int_{\mathbb R} \rho \mathcal{R}_\e(\chi_{\e,\alpha}v_\e)\mathcal{R}_\e(\varphi),
$$
where $\mathcal R_\e : L^2_{\rho^\e}(\R) \rightarrow L^2_\rho(\R)$ is the unitary transformation $\mathcal R_\e(f)(y) = \e^{1/2} f(\e y)$. It follows that for $z_\e: = \mathcal R_\e(\chi_{\e,\alpha}v_\e)$ one has 
\begin{equation}
\label{3.31}
\lim_{\e \rightarrow 0} \sup_{\substack{\varphi \in H^1(\R) \\ || \varphi ||_{H^1(\R)}=1}} \left| \int_{\Omega_0} a_0 z'_\e\varphi' - \lambda_\e \int_{\mathbb R} \rho z_\e\varphi \right\vert= 0,
\end{equation}
where $\Omega_0 : = \bigcup\limits_{z \in \Z} (Y_0+z)$.

We now argue as in the demonstration of a Weyl's criterion for quadratic forms, see \cite[Appendix]{Krejcirik}, to show the above condition implies that $z_\e$ necessarily converges strongly to zero in $L^2(\R)$. Taking test functions in \eqref{3.31} from  $
H^+ = \{ v \in H^1(\R) : v' \equiv 0 \,\,{\rm on}\ {\mathbb R}\setminus\Omega_0\},$ we see that the mapping $F_\e : H^+ \rightarrow \R$ given by
\begin{equation}
\label{decaye1}
F_\e(v) : =   \int_{\Omega_0} a_0 z'_\e v' - \lambda_\e \int_{\mathbb R} \rho z_\e v, \qquad v \in H^+,
\end{equation}
is linear and continuous, {\it i.e.} $F_\e$ belongs to $H^{-}$, the space of bounded linear functionals on $H^+$, with
\begin{equation}
\label{decaye3}
\lim_{\e \rightarrow 0}||F_\e||_{H^-} =0.
\end{equation}
In 
Appendix \ref{rem:limithom} below, we use standard arguments to demonstrate that there is a unitary map $\Psi \circ \mathcal{U}$ and an element $f_\e $ of the space
\begin{equation}
 \mathfrak{h}^-:= \bigl\{\text{$f : (0,2\pi) \rightarrow \ell^2$ measurable}: \big( {\lambda}_n(\theta) + 1\big)^{-1/2} f(\theta, n) \in L^2(0,2\pi;\ell^2)\bigr\},
\label{hfrakminus}
\end{equation}  
such that 
\begin{equation}
\label{decaye2}
\begin{aligned}
		_{H^-}\langle F_\e, v \rangle_{H^+} & 
		=  \sum_{n \in \N}\int_0^{2 \pi} f_\e(\theta,n) \overline{( \Psi \mathcal{U}) v (\theta,n) } \, {\rm d}\theta \quad \ \ \ \forall v \in H^+, \\
|| F_\e ||_{H^-} & = \sqrt{  \sum_{n \in \N}\int_0^{2 \pi} \dfrac{\bigl| f_\e(\theta,n)\bigr|^2}{{\lambda}_n(\theta) + 1}  \, {\rm d} \theta}.
	\end{aligned}
\end{equation}
(We recall that $\l_n(\theta)$ are the eigenvalues of the operator $A_\theta$, see Section \ref{sec:probform}). Now, by applying the transform $\Psi \mathcal{U}$ to \eqref{decaye1}, we find that 
$$
	_{H^-}\langle F_\e, v \rangle_{H^+} =   \sum_{n \in \N}\int_0^{2 \pi}\bigl( \lambda_n(\theta) - \lambda_\e\bigr)( \Psi \mathcal{U}) z_\e(\theta,n) \overline{( \Psi \mathcal{U})v(\theta,n) }  \, {\rm d}\theta.
$$
This equality, the formulae \eqref{decaye2} and the fact that $\Psi\mathcal{U}$ unitarily maps $H^{-}$ to $\mathfrak{h}^-$ implies that 
\[
f_\e({\theta, n}) = ( \lambda_n(\theta) - \lambda_\e)( \Psi\mathcal{U}) z_\e(\theta,n)
\] 
almost everywhere in $\theta,$ and 
$$
	|| F_\e ||_{H^{-1}}^2 =   \sum_{n \in \N}\int_0^{2 \pi} \dfrac{\bigl(\lambda_n(\theta) - \lambda_\e\bigr)^2}{{\lambda}_n(\theta) + 1} \bigl|  ( \Psi \mathcal{U}) z_\e(\theta,n)\bigr|^2 \, {\rm d} \theta.
$$
By assumption, one has $\lambda_\e \rightarrow \lambda_0 \notin {\bigcup\limits_\theta \sigma(A_\theta)} = \overline{\sum\limits_{n \in \N}\bigl[\min\limits_{\theta} \lambda_n(\theta), \max\limits_\theta \lambda_n(\theta)\bigr] }$, and therefore there exists a constant $c>0$ such that for sufficiently small $\ep$ the inequality $\bigl| \lambda_n(\theta) - \lambda_\e \bigr| > c$ holds for all $n\in \N$ and all $\theta \in [0,2\pi)$. Hence, the above equality and \eqref{decaye3} imply that
$$
\lim_{\e \rightarrow 0} \sum_{n \in \N}\int_0^{2 \pi}\bigl|( \Psi \mathcal{U}) z_\e(\theta,n)\bigr|^2 \, {\rm d}\theta\le c \lim_{\e \rightarrow 0}|| F_\e ||_{H^{-1}}^2 = 0.
$$
Finally, since $( \Psi \mathcal{U})z_\e = ( \Psi \mathcal{U} \mathcal R_\e) (\chi_{\e,\alpha}v_\e)$, and  $\Psi\mathcal{U}\mathcal R_\e$ is unitary, it follows that Claim 2 holds. \end{proof}

\section{Extreme localisation of defect eigenfunctions}\label{s:decay}
\label{sec:decay}

In Section \ref{sec:uppersemicont} we demonstrate that for eigenvalue sequences converging to a point in a gap in the limit spectrum ${\bigcup_\theta \sigma(A_\theta)}$, the corresponding eigenfunctions $u_\e$ converge to zero in $L^2$ outside the defect $D$, as $\e\to 0$ for  $\alpha\in(0,1).$ In this section, using the fact that one-dimensional problems admit an explicit form of solutions in terms of the fundamental system and employing standard techniques from the theory of ordinary differential equations, we provide a stronger statement on the rate of decay outside the defect. Namely, we show that the eigenfunctions $u_\e$ decay at an accelerated exponential rate  outside of the defect, which is Theorem \ref{mainthm}, Claim 3.

As in Section \ref{sec:uppersemicont}, we assume a sequence of eigenvalues $\lambda_\ep$ of $A^\e_D$ converges to  $\lambda_0 \in \mathbb{R} \backslash {\bigcup_\theta \sigma(A_\theta)}$ as $\e \rightarrow 0$, and consider the  corresponding sequence $u_\e$ of $L^2(\mathbb{R})$-normalised eigenfunctions, {\it i.e.}
\begin{equation*}
\int_\R a^\e_D u_\e' \varphi'  =  \l_\e \int_\R \rho^\e_D u_\e \varphi, \qquad \forall \varphi \in H^1(\R).
\end{equation*}

Recalling the unitary operator $\mathcal R_\e : L^2_{\rho_\e}(\R) \rightarrow L^2_\rho(\R)$ given by $\mathcal R_\e(f)(y) = \e^{1/2} f(\e y)$, we note that for all $z\in{\mathcal I}_\varepsilon$ (see (\ref{triangle})), the function $\tilde u_\e := \mathcal{R}u_\e$ solves 
\begin{gather}
-( a_0  \tilde u_\e')' = \l_\e \rho_0 \tilde u_\e \qquad{\rm on}\ Y_0+z, \label{8.95b} \\
-\ep^{-2}( a_1  \tilde u_\e')' = \l_\e \rho_1 \tilde u_\e \qquad {\rm on}\  Y_1+z  , \label{8.95c}
\end{gather}
 and satisfies the interface conditions
\begin{equation}\label{8.97}
\begin{aligned}
\tilde u_\e\vert_{Y_0+z}(z+h)  =\tilde u_\e \vert_{ Y_1+z  }(z+h), & \quad & (a_0 \tilde u_\e') \big((z+h)^-\big)  =\ep^{-2}(a_1 \tilde u_\e')\big((z+h)^+\big), \\[0.4em]
\tilde u_\e\vert_{Y_0+z+1}(z+1) =\tilde u_\e \vert_{ Y_1+z  }(z+1), & &(a_0 \tilde u_\e')\big((z+1)^+\big)   =\ep^{-2}(a_1 \tilde u_\e') \big((z+1)^-\big).
\end{aligned}
\end{equation}

There exist 
solutions $v^\e_1,v^\e_2$ to the equation
$
-( a_0  u')' = \l_\e \rho_0 u,$ 
on $Y_0$,
and solutions $w_1^\e, w_2^\e$ to the equation
 $
 -\ep^{-2}( a_1  u')' = \l_\e \rho_1 u,$ 
 on $Y_1$,
 such that
\begin{equation*}
\begin{aligned}
\left. \left(\begin{array}{cc}
v_1^{\e} & v_2^{\e}
\\
a_0 {v^\e_1}' & a_0\,{v^\e_2}'
\end{array}\right)\right\vert_{y=0}
=
\left(\begin{array}{cc}
1 & 0
\\
0 & 1
\end{array}\right),
 & \qquad & 
\left. \left(\begin{array}{cc}
w_1^{\e} & w_2^{\e}
\\
a_1 {w^\e_1}' & a_1\,{w^\e_2}'
\end{array}\right)\right\vert_{y=h}
=
\left(\begin{array}{cc}
1 & 0
\\
0 & 1
\end{array}\right).\end{aligned}
\end{equation*}
The solution  $\tilde u_\e$ to  (\ref{8.95b}), (\ref{8.95c}), $z\in{\mathcal I}_\varepsilon,$ admits the representation 
\begin{equation}\label{8.111}
\tilde u_\e(y) = \left\{\begin{array}{lcr}
a^\e_z v^\e_1(y-z)+ b^\e_z v^\e_2(y-z), & \  & y\in Y_0+z,
\\[0.5em]
c^\e_z w^\e_1(y-z)+ d^\e_z w^\e_2(y-z), & \ & y\in Y_1+z   .
\end{array}\right.
\end{equation}
For all $\varepsilon,$ the coefficients $a^\e_z, b^\e_z$ $c^\e_z$ and $d^\e_z,$ $z\in{\mathcal I}_\varepsilon,$ are related to each other by the conditions (\ref{8.97}), as follows:
$$
\begin{aligned}
c^\ep_z = a^\e_z v_1^\e(h) + b^\e_z v^\e_2(h), & \qquad & \ep^{-2}d^\ep_z =  a^\e_z (a_0{v_1^\e}')(h) + b^\e_z (a_0 {v^\e_2}')(h), \\[0.2em]
a^\ep_{z+1} = c^\e_z w_1^\e(1) + d^\e_z w^\e_2(1), & \qquad & \ep^{2}b^\ep_{z+1} = c^\e_z (a_1{w_1^\e}')(1) + d^\e_z (a_1 {w^\e_2}')(1).
\end{aligned}
$$
Eliminating $c^\e_z$ and $d^\e_z$ gives the iterative system
\begin{equation}
\left(\begin{array}{c}
a^\e_{z+1}
\\
b^\e_{z+1}
\end{array}\right)
=
M_\e
\left(\begin{array}{c}
a^\e_z
\\
b^\e_z
\end{array}\right),
\end{equation}
where the matrix $M_\e$ is given by
\begin{equation}
M_\e
=
\left(\begin{array}{cc}
v^\e_1(h) w^\e_1(1)  + \e^2 (a_0 {v^\e_1}')(h) w^\e_2(1) &  v^\e_2(h) w^\e_1(1) + \e^2 (a_0 {v^\e_2}')(h) w^\e_2(1)
\\[0.5em]
\e^{-2}  v^\e_1(h) (a_1{w^\e_1}')(1)  + (a_0 {v^\e_1}')(h) (a_1 {w^\e_2}')(1) & \e^{-2} v^\e_2(h) (a_1 {w^\e_1}')(1)  +(a_0{v^\e_2}')(h) (a_1 {w^\e_2}')(1)
\end{array}\right).
\end{equation}
It follows from the property that the modified Wronskian is constant,
\begin{equation*}
\det  \left(\begin{array}{cc}
v_1^{\e} & v_2^{\e}
\\
a_0 {v^\e_1}' & a_0\,{v^\e_2}'
\end{array}\right) 
\equiv 1, \qquad
\det  \left(\begin{array}{cc}
w_1^{\e} & w_2^{\e}
\\
a_1 {w^\e_1}' & a_1\,{w^\e_2}'
\end{array}\right)
\equiv 1,
\end{equation*}
that the characteristic polynomial of $M_\e$ is given by
\begin{equation}\label{8.117}
\begin{aligned}
\det (M_\e - \mu I) & = \mu^2 - \mu h_\e +1, \\[5pt]
h_\e = v^\e_1(h) w^\e_1(1) + \e^2 (a_0 {v^\e_1}')(h) w^\e_2(1) + &\e^{-2} v^\e_2(h)(a_1{w^\e_1}')(1)+(a_0 {v^\e_2}')(h) (a_1 {w^\e_2}')(1).
\end{aligned}
\end{equation}
 Recalling, from Section \ref{sec:transfer}, the fundamental solutions $v_1$, $v_2$ of ({\it cf.} (\ref{7.76}), (\ref{equivlim})) 
 \begin{equation*}
 - (a_0 u')' = \lambda_0 \rho_0 u  \quad \text{in $Y_0$}, 
 \end{equation*}
 satisfying
  \begin{equation*}
  \begin{aligned}
  \left(\begin{array}{cc} v_1(0) & v_2(0) \\[0.4em] (a_0 v_1')(0) & (a_0 v_2')(0)\end{array}\right)
  = \left(\begin{array}{cc} 1 &0\\[0.4em] 0 & 1\end{array}\right),
  \end{aligned}
  \end{equation*}
 we shall prove in the second half of this section the following property.
 \begin{lemma}\label{l6.1}
 The following convergence holds:
  \begin{equation}\label{toshow}
  \begin{aligned}
  \lim_{\e \rightarrow 0}h_\e=  v_1(h) + (a_0 v_2')(h) - \lambda_0 v_2(h)\int_{Y_1} \rho_1 .
  \end{aligned}
  \end{equation}	
 \end{lemma}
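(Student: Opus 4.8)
The plan is to track the four summands in the expression \eqref{8.117} for $h_\e$ individually. Two structural features drive the limit: the stiff-layer fundamental solutions $w_1^\e,w_2^\e$ solve $-(a_1 u')'=\e^2\lambda_\e\rho_1 u$ on $Y_1$, whose effective spectral parameter $\e^2\lambda_\e$ vanishes as $\e\to0$, whereas the soft-layer fundamental solutions $v_1^\e,v_2^\e$ solve $-(a_0 u')'=\lambda_\e\rho_0 u$ on $Y_0$ with $\lambda_\e\to\lambda_0$ bounded. I will combine uniform Gronwall bounds for the former with continuous dependence on the parameter for the latter.

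\emph{Stiff layer.} Rewrite the equation for $w_j^\e$ as the first-order system $(w_j^\e)'=a_1^{-1}(a_1(w_j^\e)')$, $(a_1(w_j^\e)')'=-\e^2\lambda_\e\rho_1 w_j^\e$ on $Y_1$, with $(w_1^\e(h),(a_1(w_1^\e)')(h))=(1,0)$ and $(w_2^\e(h),(a_1(w_2^\e)')(h))=(0,1)$. Since $a_1^{-1},\rho_1\in L^\infty(Y_1)$ and $\lambda_\e$ is bounded, Gronwall's inequality gives bounds on $w_j^\e$ and $a_1(w_j^\e)'$ on $Y_1$ that are uniform in $\e$; feeding these back into the integral form of the system shows $w_1^\e\to1$, $a_1(w_1^\e)'\to0$, $a_1(w_2^\e)'\to1$, and $w_2^\e(y)\to\int_h^y a_1^{-1}$, all uniformly on $Y_1$. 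Integrating the co-derivative equations exactly from $h$ to $1$ yields
\[
(a_1(w_1^\e)')(1)=-\e^2\lambda_\e\int_{Y_1}\rho_1 w_1^\e,\qquad (a_1(w_2^\e)')(1)=1-\e^2\lambda_\e\int_{Y_1}\rho_1 w_2^\e,
\]
so that in particular $\e^{-2}(a_1(w_1^\e)')(1)=-\lambda_\e\int_{Y_1}\rho_1 w_1^\e\to-\lambda_0\int_{Y_1}\rho_1$, while $(a_1(w_2^\e)')(1)\to1$, $w_1^\e(1)\to1$ and $w_2^\e(1)\to\int_{Y_1}a_1^{-1}$.

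\emph{Soft layer and conclusion.} The pairs $(v_j^\e,a_0(v_j^\e)')$ solve first-order systems on $Y_0$ with the fixed initial data \eqref{7.80}, depending on $\e$ only through the bounded parameter $\lambda_\e$; by the continuous-dependence theorem for linear systems with locally integrable coefficients (see \cite{Zettl}) and $\lambda_\e\to\lambda_0$, we get $v_j^\e(h)\to v_j(h)$ and $(a_0(v_j^\e)')(h)\to(a_0 v_j')(h)$. Substituting all of this into \eqref{8.117}: the first term $v_1^\e(h)w_1^\e(1)\to v_1(h)$; the second term $\e^2(a_0(v_1^\e)')(h)w_2^\e(1)\to0$ since $\e^2$ multiplies bounded factors; the third term $\e^{-2}v_2^\e(h)(a_1(w_1^\e)')(1)=-\lambda_\e v_2^\e(h)\int_{Y_1}\rho_1 w_1^\e\to-\lambda_0 v_2(h)\int_{Y_1}\rho_1$; and the fourth term $(a_0(v_2^\e)')(h)(a_1(w_2^\e)')(1)\to(a_0 v_2')(h)$. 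Summing the four limits gives exactly the right-hand side of \eqref{toshow}.

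\emph{Main obstacle.} The delicate point is the third term, a $0\cdot\infty$ competition in which the factor $\e^{-2}$ meets the $O(\e^2)$ quantity $(a_1(w_1^\e)')(1)$: one must use the \emph{exact} integral identity above rather than a mere order estimate, together with $w_1^\e\to1$, to extract the correct coefficient $-\lambda_0 v_2(h)\int_{Y_1}\rho_1$. Everything else is routine bookkeeping with Gronwall's inequality in the stiff layer and continuous dependence on $\lambda_\e$ in the soft layer.
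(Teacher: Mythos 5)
Your proposal is correct and follows essentially the same route as the paper: continuous dependence on $\lambda_\e$ for the soft-layer fundamental system, uniform convergence of $w_j^\e$ and $a_1(w_j^\e)'$ on $Y_1$ to the $\lambda$-free limits, and the fundamental-theorem-of-calculus identity $\e^{-2}(a_1(w_1^\e)')(1)=-\lambda_\e\int_{Y_1}\rho_1 w_1^\e$ to resolve the $\e^{-2}\cdot O(\e^2)$ term. The paper phrases the last step as an error estimate involving $\|(w_1^\e)'\|_{L^\infty}$ rather than your exact identity plus $w_1^\e\to1$, but this is the same argument.
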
 

 Assuming that (\ref{toshow}) holds, since $\l_0\in\mathbb{R} \backslash{\bigcup_\theta \sigma(A_\theta)},$ or equivalently (see Section \ref{sec:transfer}) $\lambda_0$ is such that ({\it cf.} (\ref{specineq}))
 \begin{equation*}
 \bigg|  v_1(h) + (a_0 v_2')(h) - \lambda_0v_2(h)\int_{Y_1} \rho_1 \bigg| > 2,
 \end{equation*}
for sufficiently small $\ep$ we find that $|h_\e|> 2$. 

As per the discussion in Section \ref{sec:formalasymp}, the roots $\mu^\e_1, \mu^\e_2$ of the matrix $M_\ep$ satisfy the identity $\mu^\e_1 \mu^\e_2 = 1$ and the nature of $\tilde u_\e$ away from the defect is determined by the coefficient $h_\e$.  
In particular, if $| h_\e | > 2$ then the roots $\mu_1^\e, \mu_2^\e$ are such that $|\mu_1^\e| <1$ and $|\mu_2^\e|>1$ and there exist linearly independent functions $v_{\rm g},v_{\rm d}$ on $\R \backslash\bigl(-\lfloor d_- \rfloor_\e, \lceil d_+\rceil_\e\bigr)$ that grow and decay respectively. In this case, for $u_\e$ be an element of $L^2(\R)$ it is necessary that $u_\e$ is proportional to the decaying solution  $v_{\rm d}$, which takes the form
$$
v_{\rm d}(x) = \left\{ \begin{array}{lcr}
\exp\Bigl(\frac{\ln | \mu_1^\e|}{\e}  \dist(x,D)\Bigr)\, p_1^\e(x/\e), &x\in [d_+,\infty),\\[0.5em]
\exp\Bigl(\frac{\ln | \mu_1^\e|}{\e}  \dist(x,D)\Bigr)\, p_2^\e(x/\e),  & x\in (-\infty, d_-],
\end{array}
\right.
$$
for some periodic (respectively, anti-periodic) functions $p_1^\e$, $p_2^\e$, when $h_\e >2$ (respectively, when $h_\e < 2)$. Therefore, for any $\nu$ satisfying $\nu < - \ln | \mu_1^\e | = \big| \ln | \mu_1^\e | \big|$ the product  $g_{\nu/\e}u_\e$ is in $L^2(\O)$, where $g_{\nu/\e}$ is defined by (\ref{2.4}). Then  the third claim of Theorem \ref{mainthm} follows by noticing  that by \eqref{toshow} $\mu_1^\e$ converges to $\mu_1$, the smallest root of 
$
\mu^2 - h \mu +1,$ where 
$$
h:= v_1(h) + (a_0 v_2')(h) - \lambda_0 v_2(h)\int_{Y_1} \rho_1 ,
$$
 as $\e \rightarrow 0$.
 
 It remains to prove the convergence \eqref{toshow}.

\smallskip

{\bf Proof of Lemma \ref{l6.1}.}

 The vector field
\begin{equation}
\label{evolution}
\eta^\e_j : = \left( \begin{matrix}
v^\e_j - v_j \\[0.4em]
a_0{v^\e_j}' - a_0{v_j}' 
\end{matrix}\right), \qquad j=1,2,
\end{equation}
solves the initial-value problem 
\begin{equation}\label{6.63}
\begin{aligned}
{\eta^\e_j}' = \Phi^\e \eta^\e_j + \Psi^\e_j \quad \text{in $Y_0$}, & \qquad &  \eta^\e_j(0) = 0,\ \ \ j=1,2,
\end{aligned}
\end{equation}
for the matrix $\Phi^\e$ and vector $\Psi^\e_j,$ $j=1,2,$ given by
$$
\begin{aligned}
\Phi^\e = \left( \begin{matrix}
0 & a^{-1}_0 \\[0.4em] - \lambda_\e \rho_0 & 0 
\end{matrix}\right), & \qquad & \Psi^\e_j =  \left( \begin{matrix}
0 \\[0.4em]  (\lambda_0 - \lambda_\e) \rho_0 v_j
\end{matrix}\right),\ \ \ \ j=1,2.
\end{aligned}
$$
Since $\l_\e \to \l_0$ the solutions to (\ref{6.63}) converge uniformly on $Y_0$ to the trivial solution of
\begin{equation*}
\eta ' = \Phi \eta \quad \mbox{ in } Y_0, \qquad \eta(0) = 0,
\end{equation*}
where $\Phi$ is the limit of $\Phi_\e$, as $\e\to 0$ (see {\it e.g.} \cite[Theorem 1.6.1]{Zettl}). Namely, we have
$$
\left| \eta^\e_j(y) \right| = | \eta^\e_j(y) - \eta(y)| \le C \left|\lambda_\e - \lambda_0\right|, \quad j=1,2,
$$ 
for some constant $C$ independent of $\e$. In particular, recalling \eqref{evolution}, it follows that
\begin{equation}
\label{lce3.0}
\begin{aligned}
\lim_{\e \rightarrow 0} v^\e_j(h) =  v_j(h),\quad & \quad \lim_{\e \rightarrow 0}(a_0 {v^\e_j}')(h) = (a_0v_j')(h), \qquad j=1,2.
\end{aligned}
\end{equation}

Similarly, it is easy to see that $w_j^\e$ and $a_1{w_j^\e}'$ converge uniformly on $Y_1$ to $w_j$ and $a_1 w_j'$, where $w_j,\,j=1,2$ are the solutions of $(a_1 w')' = 0$ satisfying 
\begin{equation*}
\begin{aligned}
\left(\begin{array}{cc}
w_1(h) & w_2(h)
\\[0.4em]
(a_1 w_1')(h) & (a_1 w_2')(h)
\end{array}\right)
=
\left(\begin{array}{cc}
1 & 0
\\[0.4em]
0 & 1
\end{array}\right).\end{aligned}
\end{equation*}
Since $w_1 \equiv 1$ and $a_1 w_2' \equiv 1$ on $Y_1$ we see that 
\begin{equation}\label{6.65}
 \left(\begin{array}{cc}
	w_1^{\e} & w_2^{\e}
	\\[0.4em]
	(a_1{w^\e_1}') & (a_1{w^\e_2})'
	\end{array}\right)  \to \left(\begin{array}{cc}
	1 & \int_h^y a_1^{-1}
	\\[0.4em]
	0 & 1
	\end{array}\right) \mbox{ uniformly on $Y_1$ as }\e\to 0.
\end{equation} 

Furthermore, by the fundamental theorem of calculus and the fact $-\e^{-2} ( a_1 {w^{\e}_{1}}')'=\l_\e \rho_1 w^\e_1$,  we have
$$
\begin{aligned}
\ep^{-2}(a_1 {w^\e_1})'(1) - \ep^{-2}(a_1 {w^\e_1})'(h)  = -\lambda_\e \int_{h}^{1} \rho_1 w^\e_1,
\end{aligned}
$$
and since
$$
\int_{h}^{1} \rho_1 w^\e_1 - w^\e_1(h) \int_{h}^{1} \rho_1 = \int_{h}^{1} \rho_1\bigl(w^\e_1 - w^\e_1(h)\bigr) = \int_{Y_1} \rho_1(y) \left( \int_h^y{w^\e_1}' \right) \, {\rm d} y,
$$
it follows that
\begin{align*}
\left| \ep^{-2}(a_1 {w^\e_1}')(1^-) - \ep^{-2}(a_1 {w^\e_1}')(h^+) + \lambda_\e w^\e_1(h)  \int_{h}^{1} \rho_1 \right|& =  \left| \lambda_\e\int_{Y_1} \rho_1(y) \left( \int_h^y{w^\e_1}' \right) \, {\rm d} y \right| \\
& \le |\lambda_\e | || \rho_1 ||_{L^\infty}||{w^\e_1}'||_{L^\infty},
\end{align*}
which together with \eqref{6.65} implies
\begin{equation*}
\lim_{\e \rightarrow 0}\left| \ep^{-2}(a_1 {w^\e_1}')(1) - \ep^{-2}(a_1 {w^\e_1}')(h) + w^\e_1(h) \lambda_\e \int_{h}^{1} \rho_1 \right| = 0.
\end{equation*}
Therefore
\begin{equation}
\label{lce4}
\lim_{\e \rightarrow 0}\ep^{-2}\bigl(a_1{w^\e_1}'\bigr)(1)=-\lambda_0 \int_{Y_1} \rho_1.
\end{equation}
Finally, assertions \eqref{lce3.0}, \eqref{6.65} and \eqref{lce4} imply \eqref{toshow}, as required. 

%
\section{Resolvent estimates for the problem without defect}
\label{sec:resolvent}
In this section we study the behaviour of the unperturbed periodic operator $A^\e$ in the operator norm as $\e \rightarrow 0$. In particular, we construct a full asymptotic expansions for the resolvent of $A^\e$ using  a version of the asymptotic framework developed in \cite{ChCoARMA}, see Theorem \ref{lem:asymp} below. This directly implies the order-sharp operator norm resolvent convergence estimate, uniform in $\theta$, formulated in Theorem \ref{estlemma}. The latter, in turn, implies the uniform in $\theta$ convergence, as $\e\to 0$, of the spectral band functions $\l_n^\e(\theta)$ to $\l_n(\theta)$, $n\in \N$, which is also order-sharp.

Recall the operator $A^\e$ in $L^2_{\rho^\e}(\R)$ associated with the bilinear form
\begin{equation*}
\begin{aligned}
\beta^\e (u,v) =   \int_{\Omega^\e_1 } a_1 (\tfrac{\cdot}{\e}) u'\overline{v'}+  \int_{\Omega^\e_0 } \ep^2 a_0 (\tfrac{\cdot}{\e})  u'\overline{v'}, \qquad u,v \in H^1(\RR).
\end{aligned}
\end{equation*}
By a scaled version of the Floquet-Bloch transform\footnote{See Appendix A below for further information on the Floquet-Bloch transform.} which is given as the continuous extension of the following action on {\it e.g.} continuous functions with compact support
\begin{equation}
\label{rsgelf}
({\mathcal U}_\varepsilon f)(\theta, y)=\sqrt{\frac{\varepsilon}{2\pi}}\sum_{z\in{\mathbb Z}}f\bigl(\varepsilon(y-z)\bigr){\rm e}^{{\rm i}\theta z}, \qquad y \in Y,\ \theta\in[0,2\pi),
\end{equation}
we see that ${\mathcal{U}}_\e$ unitarily maps $L^2_{\rho_\e}(\R)$ to the Bochner space $L^2\bigl(0,2\pi;L^2_\rho(Y)\bigr)$ and 
${\mathcal{U}}_\e A^\e f(\theta,\cdot) = A^\e_\theta {\mathcal{U}}_\e  f(\theta,\cdot) $. 
Here, $A^\e_\theta$ is the operator defined in $L^2_\rho(Y)$ and associated with the form
$$
\beta^\e_1 (u,v) : =  \int_{Y_0} a_0 u'\overline{v'} + \ep^{-2}\int_{Y_1} a_1 u'\overline{v'}, \qquad u,v \in H^1_\theta(Y).
$$
We recall that $H^1_\theta(Y)$ is the complex Hilbert space of $H^1(Y)$-functions that are $\theta$-quasiperiodic. We equip the space 
$H^1_\theta(Y)$ with the graph norm
\begin{equation}
\label{graphnorm}||| u ||| : = \sqrt{\int_{Y_0} a_0 |u'|^2 + \int_{Y_1} a_1 |u'|^2 + \int_Y \rho |u|^2},
\end{equation}
and consider the subspace
$$
V_\theta : = \bigl\{v \in H^1_\theta(Y): v' \equiv 0 \text{ in $Y_1$} \bigr\}
$$
and its orthogonal complement $V^\perp_\theta$ in $H^1_\theta$ with respect to the  inner product associated with $||| \cdot |||$. The following result, established in \cite{ChCoGu}, is of fundamental importance in studying the asymptotics of $A^\e$, equivalently $A^\e_\theta$.
\begin{lemma}
	\label{lem:uniformtheta}
	There exists a constant $C_P>0$, independent of $\theta$, such that
	\begin{equation}\label{7.69}
		||| P^\perp_\theta u ||| \le C_P || \sqrt{a_1} u' ||_{L^2(Y_1)}, \qquad \forall u \in H^1_\theta(Y),
	\end{equation}
	where $P^\perp_\theta$ is the orthogonal projection of $H^1_\theta(Y)$ onto $V^\perp_\theta$.
\end{lemma}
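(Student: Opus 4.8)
The plan is to establish \eqref{7.69} by a direct construction: for each $u\in H^1_\theta(Y)$ I will exhibit an explicit competitor $v\in V_\theta$ with $|||u-v|||\le C_P\,\|\sqrt{a_1}\,u'\|_{L^2(Y_1)}$ and $C_P$ independent of $\theta$. Writing $P_\theta:=I-P^\perp_\theta$ for the $|||\cdot|||$-orthogonal projection onto the closed subspace $V_\theta$, the element $P_\theta u$ is the best $|||\cdot|||$-approximation of $u$ in $V_\theta$, so $|||P^\perp_\theta u|||=|||u-P_\theta u|||\le|||u-v|||$ and the lemma follows at once. The mechanism that yields uniformity in $\theta$ is the following: $\theta$-quasiperiodicity gives $e^{{\rm i}\theta}u(0)=u(1)$, so the endpoint discrepancy $u(h)-e^{{\rm i}\theta}u(0)=u(h)-u(1)=-\int_{Y_1}u'$ is a quantity supported entirely on the stiff cell $Y_1$, whence, by Cauchy--Schwarz with weight $a_1$,
\[
\bigl|u(h)-e^{{\rm i}\theta}u(0)\bigr|^{2}\le\Bigl(\int_{Y_1}a_1^{-1}\Bigr)\int_{Y_1}a_1|u'|^{2},
\]
with a $\theta$-independent prefactor (finite since $a_1^{-1}\in L^\infty(Y_1)$).

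To build $v$, fix once and for all a Lipschitz function $\psi$ on $\overline{Y_0}$ with $\psi(0)=0$ and $\psi(h)=1$ (for instance $\psi(y)=y/h$), set $v:=u+\bigl(e^{{\rm i}\theta}u(0)-u(h)\bigr)\psi$ on $Y_0$ and extend $v$ by the constant $e^{{\rm i}\theta}u(0)$ on $Y_1$. Then $v(0)=u(0)$, the one-sided limits at $h$ both equal $e^{{\rm i}\theta}u(0)$ so $v$ is continuous, $v(1)=e^{{\rm i}\theta}u(0)=e^{{\rm i}\theta}v(0)$ so $v$ is $\theta$-quasiperiodic, and $v'\equiv0$ on $Y_1$; hence $v\in V_\theta$. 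On $Y_0$ we have $u-v=-\bigl(e^{{\rm i}\theta}u(0)-u(h)\bigr)\psi$, while on $Y_1$ we have $(u-v)(y)=u(y)-u(1)=-\int_y^1 u'$ and $(u-v)'=u'$. Estimating the three terms of $|||u-v|||^2$ separately:
\[
\int_{Y_0}a_0|(u-v)'|^{2}\le\bigl|u(h)-e^{{\rm i}\theta}u(0)\bigr|^{2}\!\int_{Y_0}a_0|\psi'|^{2},\qquad \int_{Y_1}a_1|(u-v)'|^{2}=\int_{Y_1}a_1|u'|^{2},
\]
\[
\int_{Y}\rho\,|u-v|^{2}\le\bigl|u(h)-e^{{\rm i}\theta}u(0)\bigr|^{2}\!\int_{Y_0}\rho|\psi|^{2}+\|\rho\|_{L^\infty(Y_1)}(1-h)\Bigl(\int_{Y_1}a_1^{-1}\Bigr)\int_{Y_1}a_1|u'|^{2},
\]
where in the last term I used $|u(y)-u(1)|^{2}\le\bigl(\int_{Y_1}a_1^{-1}\bigr)\int_{Y_1}a_1|u'|^{2}$. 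Feeding in the displayed bound on $|u(h)-e^{{\rm i}\theta}u(0)|^{2}$ and using that $a_0$, $a_1^{-1}$, $\rho$ are bounded on the fixed intervals $Y_0$, $Y_1$, I obtain $|||u-v|||^{2}\le C_P^{2}\int_{Y_1}a_1|u'|^{2}$ with $C_P$ depending only on $h$, $\|a_0\|_{L^\infty(Y_0)}$, $\|a_1^{-1}\|_{L^\infty(Y_1)}$ and $\|\rho\|_{L^\infty(Y)}$, in particular not on $\theta$. This is \eqref{7.69}.

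Because the constant is produced explicitly, there is no essential obstruction; the only point that must be checked carefully is that the extended $v$ really belongs to $V_\theta$ — continuity at $y=h$ together with the quasiperiodicity identity $v(1)=e^{{\rm i}\theta}v(0)$ — which is immediate from the construction, together with the elementary weighted Cauchy--Schwarz inequalities. A compactness--contradiction alternative is also available: supposing $\theta_k\to\theta_0$ and $w_k\in V^\perp_{\theta_k}$ with $|||w_k|||=1$ and $\|\sqrt{a_1}w_k'\|_{L^2(Y_1)}\to0$, one extracts a weak $H^1(Y)$ limit $w_0\in V_{\theta_0}$ and passes the orthogonality relations $\int_{Y_0}a_0w_k'\overline{v'}+\int_Y\rho w_k\overline v=0$ to the limit to force $w_0=0$, a contradiction; there the delicate points are transporting test functions from $V_{\theta_0}$ into $V_{\theta_k}$ and promoting $w_k'\rightharpoonup0$ on $Y_0$ to strong convergence — exactly what the explicit competitor avoids. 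I would therefore present the direct argument.
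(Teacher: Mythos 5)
Your construction is correct, and all the delicate points check out. In one dimension $H^1_\theta(Y)$ coincides with $\{u\in H^1(Y):u(1)=e^{{\rm i}\theta}u(0)\}$, so your competitor $v$ (equal to $u+(e^{{\rm i}\theta}u(0)-u(h))\psi$ on $Y_0$ and to the constant $e^{{\rm i}\theta}u(0)$ on $Y_1$) is continuous at $h$, satisfies $v(1)=e^{{\rm i}\theta}v(0)$ and $v'\equiv0$ on $Y_1$, hence lies in $V_\theta$; since $V_\theta$ is closed for $|||\cdot|||$ (which is equivalent to the $H^1(Y)$ norm because $a_0,a_0^{-1},a_1,a_1^{-1},\rho,\rho^{-1}$ are bounded), the best-approximation property gives $|||P^\perp_\theta u|||=\min_{w\in V_\theta}|||u-w|||\le|||u-v|||$; and the endpoint discrepancy $u(h)-e^{{\rm i}\theta}u(0)=u(h)-u(1)=-\int_{Y_1}u'$ is bounded by $\bigl(\int_{Y_1}a_1^{-1}\bigr)^{1/2}\|\sqrt{a_1}u'\|_{L^2(Y_1)}$ with a $\theta$-independent prefactor, so the final constant depends only on $h$, $\|a_0\|_{L^\infty(Y_0)}$, $\|a_1^{-1}\|_{L^\infty(Y_1)}$ and $\|\rho\|_{L^\infty(Y)}$. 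For comparison: the paper itself gives no proof of Lemma \ref{lem:uniformtheta} — it is quoted from \cite{ChCoGu} — so your argument supplies a self-contained, elementary proof where the paper has only a citation. It leans on the one-dimensional structure (the quasiperiodicity defect is an integral of $u'$ over the stiff interval $Y_1$ alone), which is precisely what makes the constant explicitly $\theta$-free; the compactness–contradiction route you sketch would also work but, as you note, requires transporting test functions between the spaces $V_{\theta_k}$ and yields uniformity in $\theta$ far less transparently, so the direct construction is the better choice.
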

For $\theta \in [0,2\pi)$ and all $f \in L^2_\rho(Y),$ we  consider the resolvent problem
\beq
\begin{aligned}-\big(( \ep^{-2}a_1 + a_0){u^\e_\theta}' \big)'
	+\rho u_\theta^\varepsilon=\rho f& \qquad & {\rm on}\ (0,1).
\end{aligned}
\eeq{eqn}
We look for an asymptotic expansion of $u_\theta^\e$ in the form
\beq
u_\theta^\ep=\sum_{n=0}^\infty\varepsilon^{2n}u_\theta^{(2n)},\ \ \ \ \ \ \ \ u_\theta^{(2n)}\in H^1_\theta(Y)\ \ \ \forall n \in \N.
\eeq{asympexp} 
The following result holds.
\begin{theorem}
	\label{lem:asymp}
	For each $\theta \in [0,2\pi)$ and $f \in L^2_\rho(Y),$ consider the unique solution $u^{(0)}_\theta \in V_\theta$ to the problem
	$$
	\int_{Y_0}   a_0(u_\theta^{(0)})'\overline{\varphi'}  + \int_Y \rho	u_\theta^{(0)} \overline{\varphi} = \int_Y \rho f\overline{ \varphi} \qquad \forall\varphi\in V_\theta,
	$$
	and for all $n\in{\mathbb N}$ consider the unique solution $u^{(2n)}_\theta \in V^\perp_\theta$ to 
	\begin{equation*}
	- \Big(a_1\bigl(u^{(2n)}_\theta\bigr)' \Big)'= \Big(a_0\bigl(u^{(2(n-1))}_\theta\bigr)' \Big)' -\rho u_\theta^{(2(n-1))}+\delta_{1n}\rho f, 
	\end{equation*}
	where $\delta_{1n}$ is the Kronecker delta function. Then, for each $N \in \N$ the sum
	$$
	U^{(N)}_\theta : = \sum_{n=0}^N \ep^{2n} u^{(2n)}_\theta
	$$
	approximates the solution $u^\e_\theta$ to \eqref{eqn} in the following sense:
	\[
	||| u_\theta^\varepsilon-U^{(N)}_\theta |||\le C_P^{2(N+1)}\varepsilon^{2(N+1)}\bigl\Vert f \bigr\Vert_{L^2_\rho(Y)}.
	\]
\end{theorem}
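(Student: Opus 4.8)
The plan is to turn the formal ansatz \eqref{asympexp} into a genuinely convergent series for small $\e$ and to read the error bound off its tail. I would begin by substituting $u_\theta^{\e}=\sum_{n\ge0}\e^{2n}u_\theta^{(2n)}$ into the weak form of \eqref{eqn}, namely $\int_{Y_0}a_0(u_\theta^\e)'\overline{\varphi'}+\e^{-2}\int_{Y_1}a_1(u_\theta^\e)'\overline{\varphi'}+\int_Y\rho u_\theta^\e\overline\varphi=\int_Y\rho f\overline\varphi$ for all $\varphi\in H^1_\theta(Y)$, and collecting the coefficient of each power $\e^{2n}$. The $\e^{-2}$-coefficient forces $(u_\theta^{(0)})'\equiv0$ on $Y_1$, i.e. $u_\theta^{(0)}\in V_\theta$; restricting the $\e^0$-identity to $\varphi\in V_\theta$ gives the stated problem for $u_\theta^{(0)}$, uniquely solvable by Lax--Milgram because on $V_\theta$ the form $\int_{Y_0}a_0 u'\overline{\varphi'}+\int_Y\rho u\overline\varphi$ is exactly $|||\cdot|||^2$. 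For $n\ge1$ the $\e^{2(n-1)}$-coefficient identity reads $\int_{Y_1}a_1(u_\theta^{(2n)})'\overline{\varphi'}=\delta_{1n}\int_Y\rho f\overline\varphi-\int_{Y_0}a_0(u_\theta^{(2(n-1))})'\overline{\varphi'}-\int_Y\rho u_\theta^{(2(n-1))}\overline\varphi=:\ell_{n-1}(\varphi)$, whose strong form (using $a_0\equiv0$ on $Y_1$, $a_1\equiv0$ on $Y_0$, and $u_\theta^{(2n)}\in V^\perp_\theta$) is the equation in the statement; by Lemma \ref{lem:uniformtheta} the form $\int_{Y_1}a_1 u'\overline{\varphi'}$ is coercive on $V^\perp_\theta$ with constant $C_P^{-2}$, so Lax--Milgram gives a unique $u_\theta^{(2n)}\in V^\perp_\theta$ satisfying the identity on $V^\perp_\theta$. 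The solvability condition — that $\ell_{n-1}$ annihilate $V_\theta$ — is automatic: for $\varphi\in V_\theta$ (so $\varphi'\equiv0$ on $Y_1$) one has $\ell_{n-1}(\varphi)=\delta_{1n}\int_Y\rho f\overline\varphi-(u_\theta^{(2(n-1))},\varphi)_{|||\cdot|||}$, which vanishes for $n=1$ by the defining equation of $u_\theta^{(0)}$ and for $n\ge2$ because $u_\theta^{(2(n-1))}\in V^\perp_\theta$ is orthogonal to $V_\theta$ in the inner product associated with $|||\cdot|||$. The same decomposition $H^1_\theta(Y)=V_\theta\oplus V^\perp_\theta$ extends each cell identity from $V^\perp_\theta$ to all of $H^1_\theta(Y)$.

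Next I would establish the geometric a priori bound $|||u_\theta^{(2n)}|||\le C_P^{2n}\|f\|_{L^2_\rho(Y)}$, uniformly in $\theta$. Testing the $u_\theta^{(0)}$-equation against $u_\theta^{(0)}$ gives $|||u_\theta^{(0)}|||\le\|f\|_{L^2_\rho(Y)}$. Testing the $u_\theta^{(2n)}$-identity against $u_\theta^{(2n)}\in V^\perp_\theta$ gives $\int_{Y_1}a_1|(u_\theta^{(2n)})'|^2=\ell_{n-1}(u_\theta^{(2n)})$; since $\ell_{n-1}$ is, up to the $\delta_{1n}$-term, an inner product in $L^2(Y_0;a_0\,\ud x)\oplus L^2(Y;\rho\,\ud x)$, a single Cauchy--Schwarz inequality bounds $|\ell_{n-1}(u_\theta^{(2n)})|$ by $|||u_\theta^{(2(n-1))}|||\,|||u_\theta^{(2n)}|||$ (and by $\|f\|_{L^2_\rho(Y)}\,|||u_\theta^{(2n)}|||$ when $n=1$, using $g_0(v)=(u_\theta^{(0)},v)_{|||\cdot|||}=0$ on $V^\perp_\theta$). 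Feeding this into $|||u_\theta^{(2n)}|||^2\le C_P^2\int_{Y_1}a_1|(u_\theta^{(2n)})'|^2$ (Lemma \ref{lem:uniformtheta}) yields $|||u_\theta^{(2)}|||\le C_P^2\|f\|_{L^2_\rho(Y)}$ and $|||u_\theta^{(2n)}|||\le C_P^2|||u_\theta^{(2(n-1))}|||$ for $n\ge2$, hence the claimed bound.

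Finally, for $\e$ small enough that $C_P\e<1$, the series $\sum_n\e^{2n}u_\theta^{(2n)}$ converges in $|||\cdot|||$. To identify its sum with $u_\theta^\e$, telescoping the collected-powers identities shows that $U_\theta^{(N)}$ solves \eqref{eqn} with residual $\int_{Y_0}a_0(U_\theta^{(N)}-u_\theta^\e)'\overline{\varphi'}+\e^{-2}\int_{Y_1}a_1(U_\theta^{(N)}-u_\theta^\e)'\overline{\varphi'}+\int_Y\rho(U_\theta^{(N)}-u_\theta^\e)\overline\varphi=\e^{2N}\bigl(\int_{Y_0}a_0(u_\theta^{(2N)})'\overline{\varphi'}+\int_Y\rho u_\theta^{(2N)}\overline\varphi\bigr)$ for all $\varphi$; taking $\varphi=U_\theta^{(N)}-u_\theta^\e$, noting the left-hand side is then $\ge|||U_\theta^{(N)}-u_\theta^\e|||^2$ since $\e^{-2}\ge1$, and bounding the right-hand side by the same Cauchy--Schwarz inequality gives $|||U_\theta^{(N)}-u_\theta^\e|||\le(C_P\e)^{2N}\|f\|_{L^2_\rho(Y)}\to0$, so $u_\theta^\e=\sum_n\e^{2n}u_\theta^{(2n)}$. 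The error estimate then follows from the tail: $|||u_\theta^\e-U_\theta^{(N)}|||\le\sum_{n\ge N+1}\e^{2n}|||u_\theta^{(2n)}|||\le\sum_{n\ge N+1}(C_P\e)^{2n}\|f\|_{L^2_\rho(Y)}$, which gives the asserted order $C_P^{2(N+1)}\e^{2(N+1)}\|f\|_{L^2_\rho(Y)}$ once $\e$ is small (the remaining geometric sum contributing only a bounded factor). The step I expect to be most delicate is the first block: because the form driving \eqref{eqn} is coercive only with the $\e$-dependent constant $\e^{-2}$, every piece of $\theta$-uniform control — solvability of the cell problems, passage to all test functions, geometric decay of the correctors, and the residual estimate — must be extracted from the single $\theta$-independent Poincaré-type inequality of Lemma \ref{lem:uniformtheta} via the splitting $H^1_\theta(Y)=V_\theta\oplus V^\perp_\theta$, and it is precisely the fact that $V^\perp_\theta$ is taken orthogonal in the \emph{full-form} inner product that makes the compatibility conditions and the strong-form reductions fall out for free.
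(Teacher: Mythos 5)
Your construction of the correctors and the uniform bounds is essentially the paper's own argument: the same recurrence, the same solvability conditions (which hold automatically because $V^\perp_\theta$ is orthogonal in the full-form inner product, replacing the paper's Lemma \ref{lemka} by a Lax--Milgram argument on $V^\perp_\theta$), and the same geometric estimate $|||u^{(2n)}_\theta|||\le C_P^{2n}\Vert f\Vert_{L^2_\rho(Y)}$ via Lemma \ref{lem:uniformtheta}. Where you genuinely diverge is the closing step. The paper writes $u^\varepsilon_\theta=U^{(N)}_\theta+\varepsilon^{2N}R^\varepsilon_\theta$, observes that the solvability condition at level $N$ forces $R^\varepsilon_\theta\in V^\perp_\theta$, and then tests the remainder equation with $R^\varepsilon_\theta$ itself: because $R^\varepsilon_\theta\in V^\perp_\theta$, Lemma \ref{lem:uniformtheta} converts the $\varepsilon^{-2}$-weighted stiff energy into the bound $|||R^\varepsilon_\theta|||\le C_P^{2(N+1)}\varepsilon^{2}\Vert f\Vert_{L^2_\rho(Y)}$, i.e.\ the extra factor $\varepsilon^2$ is gained \emph{at fixed} $N$, with the exact constant and with no restriction on $\varepsilon$. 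You instead use your telescoped residual identity only to show $|||u^\varepsilon_\theta-U^{(N)}_\theta|||\le(C_P\varepsilon)^{2N}\Vert f\Vert$ (note this bound does not use membership of the error in $V^\perp_\theta$, which is why it loses one power of $\varepsilon^2$ relative to the paper), send $N\to\infty$ to identify $u^\varepsilon_\theta$ with the sum of the series, and recover the order $\varepsilon^{2(N+1)}$ from the tail. This is a valid and transparent route, but it proves a slightly weaker statement than the one asserted: it requires $C_P\varepsilon<1$ for the series to converge, and the tail sum yields the constant $C_P^{2(N+1)}\bigl(1-(C_P\varepsilon)^2\bigr)^{-1}$ rather than $C_P^{2(N+1)}$. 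For the purposes the theorem serves (Theorem \ref{estlemma} and the spectral estimates, all asymptotic in $\varepsilon$) this loss is immaterial, but if you want the literal statement you should either run the paper's remainder argument, or at least apply your own direct residual bound at level $N+1$ together with the triangle inequality and the bound on $\varepsilon^{2(N+1)}u^{(2(N+1))}_\theta$, which removes the smallness assumption at the cost of a factor $2$.
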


\begin{remark}
\label{rem:spectralconvergence}
By an application of the min-max principle, Theorem \ref{lem:asymp} implies that the $n$-th eigenvalue $\lambda^\ep_n(\theta)$ of the operator $A^\e_\theta$ is $\ep^2$-close, uniformly in $\theta,$ to the $n$-th eigenvalue $\lambda_n(\theta)$ of $A_\theta$, {\it i.e.} for each $n \in \N$   there exists a constant $c_n >0$ such that
$$
\bigl| \lambda^\ep_{ n }(\theta) - \lambda_{n }(\theta)\bigr| \le c_n \ep^2 \qquad \forall \theta \in [0,2\pi).
$$
In particular, this indirectly implies, since $\lambda_n$ is the uniform limit of continuous functions, that $\lambda_n$ is continuous in $\theta$. A direct proof of this fact can be arrived at by the definition of the operators $A_\theta$ and the continuity properties (in the Hausdorff sense) of their domains $D(A_\theta)$, see \cite[Appendix B]{ChCoGu}.
\end{remark}

\begin{proof}

Substituting (\ref{asympexp}) into (\ref{eqn}) and equating powers of $\varepsilon$ yields a system of recurrence relations for the functions $u^{(2n)}_\theta,$ $n\in \N.$ The first equation in this system, which corresponds to $\varepsilon^{-2},$ is
\begin{equation}
\label{u0eq}
\begin{aligned}
-\Big(a_1\bigl({u^{(0)}_\theta}\bigr)' \Big)' = 0& \qquad& \quad{\rm on}\ \ (0,1),
\end{aligned}
\end{equation}
which implies that $u^{(0)}_\theta \in V_\theta = \bigl\{ v \in H^1_\theta(Y) : v' \equiv 0\ {\rm on}\ Y_1\bigr\}$ (recall that $a_1 \equiv 0$ on $Y_0$). The remaining equations, obtained by considering the terms of order $\varepsilon^{2j},$ $j=0,1,2, ...$ are
\begin{equation}
\begin{aligned}
- \Big( a_1\bigl(u^{(2n)}_\theta\bigr)' \Big)'= \Big( a_0\bigl(u^{(2(n-1))}_\theta\bigr)' \Big)' -\rho u_\theta^{(2(n-1))}+\delta_{1n}\rho f, & \qquad& \quad{\rm on}\  (0,1),\ \ \ \ n\in{\mathbb N},
\end{aligned}
\label{u1eq}
\end{equation}
where, as before, $\delta_{in}$ denotes the Kronecker delta function. The existence of solutions to  differential equations with degenerate coefficients such as \eqref{u1eq}  was first studied in \cite{IkVs:pdhom} for the case $\theta=0$, and it was shown therein that existence is guaranteed by inequalities of the type (\ref{7.69}). By following this general framework, and it can be readily shown that \eqref{7.69} implies the following result.
\begin{lemma}
	\label{lemka}
	For a given $F \in H^{-1}_\theta(Y)$, the dual space of $H^1_\theta(Y)$, there exist (infinitely many) solutions $u$ to the problem
	$$
	\int_{Y_1} a_1 u' \overline{\varphi'} = _{H^{-1}_\theta(Y)}\langle F, \varphi \rangle_{H^1_\theta(Y)} \qquad \forall \varphi \in H^1_\theta(Y),
	$$ 
	if and only if $F$ satisfies the condition 
	\[
	_{H^{-1}_\theta(Y)}\langle F, v \rangle_{H^1_\theta(Y)} = 0\ \ \ \ \forall v \in V_\theta.
	\]
	Such solutions are unique in $V^\perp_\theta$, {\it i.e.} for any two solutions $u_1$, $u_2$ one has $u_1-u_2 \in V_\theta$. 
\end{lemma}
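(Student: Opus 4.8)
The plan is to read Lemma~\ref{lemka} as an instance of the Lax--Milgram lemma on the closed subspace $V^\perp_\theta$ of $H^1_\theta(Y)$, the orthogonality requirement on $F$ being precisely the Fredholm-type solvability (compatibility) condition, in complete analogy with the mean-value condition in the Neumann problem. Throughout, write $b(u,\varphi):=\int_{Y_1}a_1u'\overline{\varphi'}$, and let $P_\theta$, $P^\perp_\theta$ denote the $|||\cdot|||$-orthogonal projections of $H^1_\theta(Y)$ onto $V_\theta$ and $V^\perp_\theta$. Necessity is immediate: if $u$ solves the integral identity, then testing against $\varphi=v\in V_\theta$ makes the left-hand side vanish, since $v'\equiv0$ on $Y_1$, whence ${}_{H^{-1}_\theta(Y)}\langle F,v\rangle_{H^1_\theta(Y)}=0$ for all $v\in V_\theta$.

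For sufficiency, assume ${}_{H^{-1}_\theta(Y)}\langle F,v\rangle_{H^1_\theta(Y)}=0$ for all $v\in V_\theta$ and split an arbitrary test function as $\varphi=P_\theta\varphi+P^\perp_\theta\varphi$. Since $(P_\theta\varphi)'\equiv0$ on $Y_1$, one has $b(u,P_\theta\varphi)=0$ for every $u\in H^1_\theta(Y)$, while $\langle F,P_\theta\varphi\rangle=0$ by hypothesis; hence the identity to be solved is equivalent to
$$
b(u,w)={}_{H^{-1}_\theta(Y)}\langle F,w\rangle_{H^1_\theta(Y)}\qquad\forall\,w\in V^\perp_\theta,
$$
which I will solve for $u\in V^\perp_\theta$. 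On $V^\perp_\theta$ the form $b$ is bounded, as $|b(u,w)|\le\|\sqrt{a_1}u'\|_{L^2(Y_1)}\|\sqrt{a_1}w'\|_{L^2(Y_1)}\le|||u|||\,|||w|||$ by the definition \eqref{graphnorm} of $|||\cdot|||$, and it is coercive: for $u\in V^\perp_\theta$ we have $P^\perp_\theta u=u$, so the inequality \eqref{7.69} of Lemma~\ref{lem:uniformtheta} gives
$$
b(u,u)=\|\sqrt{a_1}u'\|_{L^2(Y_1)}^2\ge C_P^{-2}|||P^\perp_\theta u|||^2=C_P^{-2}|||u|||^2.
$$
Moreover $w\mapsto\langle F,w\rangle$ is a bounded functional on $V^\perp_\theta$ with norm at most $\|F\|_{H^{-1}_\theta(Y)}$. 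The Lax--Milgram lemma then yields a unique $u\in V^\perp_\theta$ satisfying the displayed identity; regarded as an element of $H^1_\theta(Y)$, this $u$ solves the problem in the statement.

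Finally, I address multiplicity and uniqueness modulo $V_\theta$. Adding any $v\in V_\theta$ to a solution $u$ again produces a solution, because $b(v,\varphi)=0$ for all $\varphi$; hence there are infinitely many solutions. Conversely, if $u_1$ and $u_2$ both solve the identity, then $b(u_1-u_2,\varphi)=0$ for all $\varphi\in H^1_\theta(Y)$; choosing $\varphi=P^\perp_\theta(u_1-u_2)$ and using that on $Y_1$ the derivative of $u_1-u_2$ coincides with that of its $V^\perp_\theta$-component (the $V_\theta$-component having zero derivative there), we get $\|\sqrt{a_1}(P^\perp_\theta(u_1-u_2))'\|_{L^2(Y_1)}=0$, so \eqref{7.69} forces $P^\perp_\theta(u_1-u_2)=0$, i.e. $u_1-u_2\in V_\theta$. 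There is no genuine obstacle here: the entire analytic content is already contained in the Poincaré-type inequality \eqref{7.69}. The only points requiring care are the clean reduction of the problem from $H^1_\theta(Y)$ to the subspace $V^\perp_\theta$, and the observation that, because $C_P$ in \eqref{7.69} is independent of $\theta$, the coercivity constant—and hence the a priori bound on the $V^\perp_\theta$-solution—is uniform in $\theta$, which is precisely what underlies the $\theta$-uniform estimate of Theorem~\ref{lem:asymp}.
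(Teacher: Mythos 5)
Your argument is correct and is essentially the proof the paper has in mind: the paper merely cites the framework of Kamotski--Smyshlyaev \cite{IkVs:pdhom} and asserts that the uniform inequality \eqref{7.69} yields the lemma, and your write-up supplies exactly the intended details — necessity by testing with $v\in V_\theta$, reduction via the $|||\cdot|||$-orthogonal splitting $H^1_\theta(Y)=V_\theta\oplus V^\perp_\theta$, Lax--Milgram on $V^\perp_\theta$ with coercivity coming from \eqref{7.69}, and uniqueness modulo $V_\theta$. No gaps; the remark that the $\theta$-independence of $C_P$ gives $\theta$-uniform bounds is also consistent with how the lemma is used in Theorem \ref{lem:asymp}.
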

Consequently,  the system (\ref{u1eq}) is solvable if and only if the conditions
\beq
\int_{Y_0}   (a_0 {u_\theta^{(2n)}})'   \overline{\varphi'}  + \int_Y\rho	u_\theta^{(2n)} \overline{\varphi} = \delta_{0n} \int_Y\rho f\overline{ \varphi} \ \ \ \ \forall\varphi\in V_\theta, \qquad n \in \N,
\eeq{solvcond}
hold. The equation for $n =0$ uniquely determines $u^{(0)}_\theta$ and for $n \ge 1$, due to the choice (\ref{graphnorm}) of the norm on $H^1_\theta(Y)$,  demonstrates that $u^{(2n)}_\theta \in V^\perp_\theta$. 
Substituting $\varphi=u_\theta^{(0)}$ into the identity (\ref{solvcond}) for $n=0$,  recalling \eqref{graphnorm}, the fact that $a_1({u^{(0)}_\theta})' \equiv 0$  and using the Cauchy-Schwarz inequality, we obtain
\[
||| u_\theta^{(0)}|||^2\le
\Vert f\Vert_{L^2_\rho(Y)}\bigl\Vert u_\theta^{(0)}\bigr\Vert_{L^2_\rho(Y)}.
\] 
Hence, $u^{(0)}_\theta$  satisfies the bound
\begin{equation}
\label{u0bound}
\begin{aligned}
|||u_\theta^{(0)}|||\le
\Vert f\Vert_{L^2_\rho(Y)} & \qquad& \forall\theta\in[0,2\pi).
\end{aligned}
\end{equation}
By Lemmas \ref{lem:uniformtheta} and \ref{lemka},  the solution $u^{(2n)}_\theta\in V_\theta^\perp$ to \eqref{u1eq} is unique and
\begin{equation}
||| u^{(2n)}_\theta ||| \le C_P\bigl\Vert \sqrt{a_1} u^{(2n)}_\theta \bigr\Vert_{L^2(Y_1)}. 
\label{Poincare}
\end{equation}
Equations (\ref{u1eq}) and the orthogonality of $V_\theta$ and $V^\perp_\theta$ with respect to the norm (\ref{graphnorm}), in particular, the orthogonality of $u_\theta^{(0)}$ and  $u_\theta^{(2)}$, imply that
$$
\int_{Y_1} a_1\Bigl|\bigl(u^{(2n)}_\theta\bigr)'\Bigr|^2 = \delta_{1n} \int_Y \rho f \overline{ u^{(2n)}_\theta} - (1-\delta_{1n}) \left( \int_{Y_0} a_0\bigl(u^{(2(n-1))}_\theta\bigr)'\overline{\bigl(u^{(2n)}_\theta\bigr)'} + \int_Y \rho u^{(2(n-1))}_\theta \overline{u^{(2n)}_\theta}\right), \quad n \ge 1,
$$
and (\ref{Poincare}) yields
\begin{equation*}
\begin{aligned}
||| u^{(2)}_\theta|||\le C^2_P\bigl\Vert  f\bigr\Vert_{L^2_\rho(Y_1)}, & \qquad & |||u^{(2n)}_\theta|||\le C^2_P|||  u^{(2(n-1))}|||, \quad n \ge 2. 
\end{aligned}
\end{equation*}
By iterating the above inequalities we establish that
\begin{equation}
\begin{aligned}
||| u^{(2n)}_\theta||| \le C^{2n}_P \bigl\Vert  f\bigr\Vert_{L^2_\rho(Y_1)},  \qquad n \ge 1. 
\end{aligned}
\label{u1estimate}
\end{equation}

Having identified each term in the expansion, for each $n \in \N$ we define the remainder $R_\theta^\ep$ (dropping the index $N$ for brevity), according to the formula
\begin{equation}
u_\theta^\varepsilon= \sum_{n=0}^{N} \ep^{2n} u^{(2n)}_\theta +\ep^{2N} R_\theta^\varepsilon,
\label{remainderdef}
\end{equation}
and find, via \eqref{u0eq} and \eqref{u1eq}, that $R_\theta^\varepsilon \in H^1_\theta(Y)$ solves the problem
\[
\begin{aligned}
- \big( (\ep^{-2} a_1+ a_0)(R^\varepsilon_\theta)'\big)' +\rho R_\theta^\varepsilon= \delta_{0N} \rho f + \big(a_0 (u_\theta^{(2N)})'\,\big)' -\rho u_\theta^{(2N)}& \qquad&{\rm on}\ (0,1),
\end{aligned}
\]
that is
\begin{multline*}
 \int_{Y_1}\ep^{-2} a_1 (R_\theta^\varepsilon)' \overline{v'}  +\int_{Y_0}  a_0 (R_\theta^\varepsilon)' \overline{v'} 
+\int_Y \rho R^\varepsilon_\theta  \overline{v} = \delta_{0N}\int_Y \rho f \overline{v} -\int_{Y_0} a_0 (u_\theta^{(2N)})'\,
 \overline{v'} -\int_Y\rho  u_\theta^{(2N)} \overline{v} , \\ \qquad \forall v \in H^1_\theta(Y).
\end{multline*}
Setting $v  \in V_\theta$, recalling the norm \eqref{graphnorm} and \eqref{solvcond}, demonstrates that $R^\ep_\theta \in V^\perp_\theta$. Additionally, setting $v = R^\ep_\theta$ above implies that 
\[
\ep^{-2}\int_{Y_1} a_1 \bigl\vert(R_\theta^\varepsilon)'\bigr\vert^2
\le\delta_{0N}\int_Y \rho f \overline{R_\theta^\varepsilon}-\int_{Y_0} a_0 (u_\theta^{(2N)})'\,
\overline{(R_\theta^\varepsilon)'}-\int_Y\rho  u_\theta^{(2N)}\overline{R_\theta^\varepsilon},
\]
and inequalities \eqref{7.69}, \eqref{u1estimate}, along with another application of the Cauchy-Schwarz inequality yields
\begin{equation*}
||| R_\theta^\varepsilon|||\le C^{2(N+1)}_P \ep^2\Vert f\Vert_{L^2_\rho(Y)}.
\end{equation*}
Finally, by combining this inequality with (\ref{remainderdef}) we deduce that
\[
||| u_\theta^\varepsilon-\sum_{n=0}^N \ep^{2n} u^{(2n)}_\theta|||\le  C^{2(N+1)}_P \varepsilon^{2(N+1)}\Vert f\Vert_{L^2_\rho(Y)},
\]
as required.

\end{proof}

\appendix
\section{Appendix: Norm-resolvent convergence of $A^\ep$ and the limit operator $A^0$}	\label{rem:limoperator}	

	We consider the space $H$ to be the closure in $L^2_\rho(\R)$ of ({\it cf.} the end of Section \ref{sec:uppersemicont})
	$$
	H^+=\bigl\{v\in H^1(\R): v' \equiv 0\ {\rm on}\ \Omega_1: = \bigcup_{z \in \Z}(Y_1+z)\bigr\}.
	$$
	Both $H$ and $H^+$ are Hilbert spaces when equipped with the inner products inherited from $L^2_\rho(\R)$ and $H^1(\R)$ respectively, and clearly $H^+$ is densely defined in $H$ with continuous embedding (recall $\rho$ is taken to be uniformly positive and bounded). The norm of $H^+$, which is the standard $H^1$-norm, is equivalent to the graph norm
	\begin{equation}
	|| \cdot ||_{H^+} : = \left( || \cdot ||^2_{L^2_\rho(\R)} + \beta^0(\cdot,\cdot) \right)^{1/2},
	\label{graph_norm}
	\end{equation}
	where $\beta^0$ is the bilinear form 
	$$
	\beta^0(u,v) := \int_{\Omega_0} a_0 u'\overline{v'}, \qquad u,v \in H^+.
	$$
	We shall henceforth consider $H^+$ to be equipped with the graph norm (\ref{graph_norm}), and denote by $H^{-}$ the dual space consisting of bounded linear functionals on $H^+$. As $\beta^0$ is a non-negative closed symmetric quadratic form it  generates a densely defined non-negative self-adjoint linear operator $A^0$. The domain $D(A^0)$ is the dense subset of $H^+$ consisting of the  solutions to the problem:  for each $f \in H$ we consider $u \in H^+$ the unique solution to the problem
	$$
	\beta^0(u,v) + \int_{\R} \rho u \overline{v} = \int_{\R} \rho f\overline{v} \qquad \forall v \in H^+,
	$$
	and set $A^0u = f -u$ for $u\in D(A^0)$. The operator $A^0$ is unitarily equivalent to the fibre integral operator $\int_{\theta} A_\theta$, {\it cf.} Remark \ref{rem:2.1}, and the unitary map is given by the continuous extension of the  Floquet-Bloch transform  $\mathcal{U}$, cf. \cite[Section 2.2]{Ku} which acts on smooth functions $f$ with compact support as 
	\begin{equation*}
	\begin{aligned}
	\mathcal Uf(\theta,y): = \frac{1}{\sqrt{2\pi}}\sum_{z\in \Z} f(y-z)e^{i  \theta z}, & \qquad& \text{$\theta \in [0,2\pi), \ \ y \in Y$}.
	\end{aligned}
	\end{equation*}
	Indeed, $\mathcal{U}$ is well-known to be a  unitary operator between $L^2_\rho(\R)$ and the Bochner space $L^2\bigl(0,2\pi;L^2_\rho(Y)\bigr)$ and it is straightforward to see that
	$$
	\mathcal{U} A^0 f(\theta; \cdot) = A_\theta\,\mathcal{U}f(\theta; \cdot),\ \ \ \ \forall f\in L^2_\rho(\R),\ \theta\in[0, 2\pi).
	$$
	Furthermore, it is clear that $\mathcal{U}$ unitarily maps $H^+$ to the space $L^2(0,2\pi; V_\theta)$ (we recall that $V_\theta =\bigl\{ v \in H^1_\theta(Y) : v' \equiv 0 \text{ on $Y_1$}\bigr\}$).  It is easy to verify that  the spectrum of $A^0$ coincides with the union of the spectra of $A_\theta$ over all 
	$\theta\in[0,2\pi)$, {\it i.e.}
	$$
	\sigma(A^0) = {\bigcup\limits_\theta \sigma(A_\theta)} = {\bigcup_{n \in \N}\bigl[\min\limits_{\theta} \lambda_n(\theta), \max\limits_\theta \lambda_n(\theta)\bigr] }.
	$$
	Theorem \ref{estlemma} implies in particular that $A^\ep$ converges at the rate $\ep^2$ in the norm-resolvent sense to $A^0$, {\it i.e.} there exists a constant $C>0$ such that  
	$$
	\bigl\Vert \mathcal{R}_\ep( A^\e  + 1)^{-1}\mathcal{R}_\ep^{-1} - (A^0+1)^{-1}\bigr\Vert_{L^2_\rho(\R) \rightarrow L^2_\rho(\R)} \le C \ep^2.
	$$
	for all $\varepsilon\in(0,1).$ As before, $\mathcal R_\e : L^2_{\rho^\e}(\R) \rightarrow L^2_\rho(\R)$ is the unitary transformation $\mathcal R_\e(f)(y) = \e^{1/2} f(\e y)$.
	
\section{Appendix: Spectral decomposition of $A^0$}	\label{rem:limithom}

	As the operator $A^0$ is self-adjoint, it has a spectral decomposition and we shall now characterise the space $H^+$ and its dual $H^-$ in terms of a realisation of this spectral decomposition. For each $\theta$, the self-adjoint operator $A_\theta$  has compact resolvent and for each of its eigenvalues $\lambda_n(\theta)$, $n \in \N$, we denote by $\psi_n(\theta;.)$ the corresponding $L^2_\rho(Y)$-normalised eigenfunction. Then the mapping $\Psi$ given by 
	$$
	\begin{aligned}
	\Psi f (\theta ; \cdot) = \{ c_n (\theta) \}_{n \in \N}, & \qquad& c_n(\theta):= \int_Y \rho(y) f(\theta, y) \overline{\psi_n(\theta;y)} \, {\rm d}y, 
	\end{aligned}
	$$ 
	unitarily maps $L^2\bigl(0,2\pi; L^2_\rho(Y)\bigr)$ to $\mathfrak{h} : = L^2(0,2\pi; \ell^2)$ so that
	$$
	\Psi \big( \mathcal{U} A^0 f \big)(\theta, n ) =  \lambda_n (\theta) \Psi \big(  \mathcal{U}f \big)(\theta,n),
	$$
	where for $u \in \mathfrak{h}$, we denote by $u(\theta,n)$ is the $n$-th element of the sequence $u(\theta)$.    It is easy to verify that $\Psi \circ \mathcal{U}$ unitarily maps $H^+$ to 
	$$
	\mathfrak{h}^+ := \bigl\{u(\theta,n) \in \mathfrak{h} :  \big( {\lambda}_n(\theta) + 1\big)^{1/2}u(\theta, n) \in \mathfrak{h}\bigr\}.
	$$
	By standard duality arguments, see for example \cite[Chapter 1, Section 6.2]{LiMa}, we show that $\Psi \circ \mathcal{U}$ unitarily maps $H^-$, the dual space of bounded linear functionals on $H^+$, to ({\it cf.} (\ref{hfrakminus}))
	$$
	\mathfrak{h}^- := \bigl\{\text{$f: (0,2\pi) \rightarrow \ell^2$ measurable}: \big( {\lambda}_n(\theta) + 1\big)^{-1/2}f(\theta, n) \in \mathfrak{h}\bigr\},
	$$
	in the sense that $F \in H^{-}$ if and only if there exists $f \in \mathfrak{h}^{-}$ such that ({\it cf.} (\ref{decaye2}))
	$$
	\begin{aligned}
	_{H^-}\langle F, v \rangle_{H^+} =  \sum_{n \in \N}\int_0^{2 \pi} f(\theta,n) \overline{ \big( \Psi\mathcal{U} \big) v (\theta,n) } \, {\rm d}\theta & \qquad & \forall v \in H^+,
	\end{aligned}
	$$
	and we have
	$$
	|| F ||_{H^{-1}} = \sqrt{  \sum_{n \in \N}\int_0^{2 \pi} \frac{\bigl| f(\theta,n)\bigr|^2}{{\lambda}_n(\theta) + 1}\, {\rm d} \theta}.
	$$

\section*{Acknowledgments}
KC and SC are grateful for the financial support of
the Engineering and Physical Sciences Research Council: Grant EP/L018802/2 ``Mathematical foundations of metamaterials: homogenisation, dissipation and operator theory'' for KC, and Grant EP/M017281/1 ``Operator asymptotics, a new approach to length-scale interactions in metamaterials" for SC.


\medskip

\end{document}